\renewcommand{\pod}[1]{\allowbreak\mathchoice
  {\if@display \mkern 18mu\else \mkern 8mu\fi (#1)}
  {\if@display \mkern 18mu\else \mkern 8mu\fi (#1)}
  {\mkern4mu(#1)}
  {\mkern4mu(#1)}
}
\renewcommand{\eqref}[1]{(\ref{#1})}   
\theoremstyle{plain}
\newtheorem{theorem}{Theorem}[section]
\newtheorem{corollary}{Corollary}[section]
\newtheorem{lemma}{Lemma}[section]
\newtheorem{algorithm}{Algorithm}
\newtheorem{definition}{Definition}
\begin{document}

\title{Another application  of Linnik's dispersion method}
  
\date{\today}
\author{\'Etienne Fouvry}
\address{Laboratoire de Math\' ematiques d'Orsay, Univ. Paris--Sud, CNRS, Universit\' e Paris--Saclay,    91405 Orsay, France}
\email{Etienne.Fouvry@u-psud.fr}
\author{Maksym Radziwi\l\l}
\address{
  Department of Mathematics \\
Caltech \\
1200 E California Blvd \\
Pasadena \\ CA \\ 91125
  }

%
\email{maksym.radziwill@gmail.com}
 
\dedicatory{In memoriam Professor Yu. V. Linnik (1915-1972)}
\keywords{equidistribution in arithmetic progressions, dispersion method}
\subjclass[2010]{Primary 11N69} 

\begin{abstract} Let $\alpha_m$ and $\beta_n$ be two sequences of real numbers supported on $[M, 2M]$ and $[N, 2N]$ with $M = X^{1/2 - \delta}$ and $N = X^{1/2 + \delta}$. We show that there exists a $\delta_0 > 0$ such that the multiplicative convolution of $\alpha_m$ and $\beta_n$ has exponent of distribution $\frac{1}{2} + \delta-\varepsilon$ (in a weak sense) as long as $0 \leq \delta < \delta_0$,    the sequence  $\beta_n$ is Siegel-Walfisz and both sequences $\alpha_m$ and $\beta_n$ are bounded above by divisor functions. Our result is thus a general dispersion estimate for ``narrow'' type-II sums. The proof relies crucially on Linnik's dispersion method and recent bounds for trilinear forms in Kloosterman fractions due to Bettin-Chandee. We highlight an application related to the Titchmarsh divisor problem. 
\end{abstract}

\maketitle

\renewcommand{\theenumi}{(\roman{enumi})}
 

\section{Introduction} An important theme in analytic number theory is the study of the distribution of sequences in arithmetic progressions. A representative result in this field is the Bombieri-Vinogradov theorem \cite{BV}, according to which for any $A > 0$, 
\begin{equation} \label{eq:primes} 
  \sum_{q \leq Q} \max_{(a,q) = 1} \Big | \sum_{\substack{p \leq x \\ p \equiv a \pmod{q}}} 1 - \frac{1}{\varphi(q)} \sum_{p \leq x} 1 \Big | \ll_{A} x (\log x)^{-A}
\end{equation}
provided that $Q \leq \sqrt{x} (\log x)^{-B}$ for some constant $B = B(A)$ depending on $A > 0$.

Nothing of the strength of \eqref{eq:primes} is known in the range $Q > x^{1/2 + \varepsilon}$ for any fixed $\varepsilon > 0$ and already establishing for any fixed integer $a \neq 0$ and for all $A > 0$ the weaker estimate,
\begin{equation} \label{eq:primes2}
\sum_{q \leq Q} \Big | \sum_{\substack{p \leq x \\ p \equiv a \pmod{q}}} 1 - \frac{1}{\varphi(q)} \sum_{p \leq x} 1 \Big | \ll_{a,A} x (\log x)^{-A}
\end{equation}
with $Q = x^{1/2 + \delta}$ and some $\delta > 0$ is a major open problem. If we could show \eqref{eq:primes2} then we would say that \textit{the primes have exponent of distribution $\frac 12 + \delta$ in a weak sense}. However we note that there are results of this type if one allows to restrict the sum over $q \leq Q$ in \eqref{eq:primes2} to integers that are $x^{\varepsilon}$ smooth, for a sufficiently small $\varepsilon > 0$ (see \cite{Zhang, Polymath}).

Any known approach to \eqref{eq:primes2} goes through combinatorial formulas which decompose the sequence of prime numbers as a linear combination of multiplicative convolutions of other sequences (see for example \cite[Chapter 13]{I-K}). 
If one attempts to establish \eqref{eq:primes2} by using such a combinatorial formula then one is led to the problem of showing that for any $A > 0$, 
  \begin{equation} \label{eq:geh}
  \sum_{\substack{q \leq Q \\ (q,a) = 1}} \Big | \sum_{\substack{M \leq m \leq 2M \\ N \leq n \leq 2N \\ m n \equiv a \pmod{q}}} \alpha_m \beta_n - \frac{1}{\varphi(q)} \sum_{\substack{M \leq m \leq 2M \\ N \leq n \leq 2N \\ (m n, q) = 1}} \alpha_{m} \beta_{n} \Big | \ll X (\log X)^{-A} \ , \ X := M N
  \end{equation}
  with $Q > X^{1/2 + \varepsilon}$ for some $\varepsilon > 0$. In \cite{Li} Linnik developed his ``dispersion method'' to tackle such expressions. The method relies crucially on the bilinearity of the problem, followed by the use of various estimates for Kloosterman sums of analytic or algebraic origins. 
 For a bound such as \eqref{eq:geh} to hold one needs to impose a ``Siegel-Walfisz condition'' on at least one of the sequences $\alpha_m$ or $\beta_n$. 

  \begin{definition} 
  We say that a sequence $\boldsymbol \beta = (\beta_n)$ satisfies a Siegel-Walfisz condition (alternatively we also say that $\boldsymbol \beta$ is \textit{Siegel-Walfisz}), if there exists an integer $k > 0$ such that for any fixed $A > 0$, uniformly in $x \geq 2$, $q > |a| \geq 1, r \geq 1$ and $(a,q) = 1$, we have,
  \begin{equation*} \label{condSW}
  \sum_{\substack{x < n \leq 2x \\ n \equiv a \pmod{q} \\ (n,r) = 1}} \beta_n - \frac{1}{\varphi(q)} \sum_{\substack{x < n \leq 2x \\ (n, q r) = 1}} \beta_{n}  = O_{A}(\tau_k(r) \cdot x (\log x)^{-A}). 
\end{equation*}
where $\tau_k(n) := \sum_{n_1 \ldots n_k = n} 1$ is the $k$th divisor function. 
\end{definition}

It is widely expected (see e.g \cite[Conjecture 1]{BFI1}) that \eqref{eq:geh} should hold as soon as $\min(M,N) > X^{\varepsilon}$ provided that at least one of the sequences $\alpha_n,\beta_m$ is Siegel-Walfisz, and that there exists an integer $k > 0$ such that $|\alpha_m| \leq \tau_k(m)$ and $|\beta_n| \leq \tau_k(n)$ for all integers $m,\, n \geq 1$. We are however very far from proving a result of this type.

When $Q > X^{1/2 + \varepsilon}$ for some $\varepsilon > 0$, there are only a few results establishing \eqref{eq:geh} unconditionally in specific ranges of $M$ and $N$ (precisely \cite[Th\' eor\`eme 1]{FoActaMath}, \cite[Theorem 3]{BFI1}, \cite[Corollaire 1]{FoAnnENS}, \cite[Corollary 1.1 (i)]{FouRadz2}). All the results that establish \eqref{eq:geh} unconditionally place a restriction on one of the variable $N$ or $M$ being much smaller than the other. We call such cases ``unbalanced convolutions'' and this forms the topic of our previous paper \cite{FouRadz2}. 

In applications a recurring range is one where $M$ and $N$ are roughly of the same size. This often corresponds to the case of ``type II sums'' in which one is permitted to exploit bilinearity but not much else. This is the range to which we contribute in this paper. 

\begin{theorem} \label{thm:main}
  Let $k \geq 1$ be an integer and $M,N \geq 1$ be given. Set $X = M N$. Let $\alpha_m$ and $\beta_n$ be two sequences of real numbers supported respectively on $[M, 2M]$ and $[N, 2N]$. Suppose that $\boldsymbol \beta = (\beta_n)$ is Siegel--Walfisz and suppose that $|\alpha_{m}| \leq \tau_k(m)$ and $|\beta_n| \leq \tau_k(n)$ for all integers $m,n \geq 1$. Then, for every $\varepsilon > 0$ and every $A > 0$,
  \begin{equation} \label{eq:maineq}
  \sum_{\substack{Q \leq q \leq 2Q \\ (q,a) = 1}} \Big | \sum_{\substack{m n \equiv a \pmod{q}}} \alpha_m \beta_n - \frac{1}{\varphi(q)} \sum_{(m n, q) = 1} \alpha_m \beta_n \Big | \ll_{A} X (\log X)^{-A} 
  \end{equation}
  uniformly in $N^{56/23} X^{-17/23 + \varepsilon} \leq Q \leq N X^{-\varepsilon}$ and $1 \leq |a| \leq X$.
\end{theorem}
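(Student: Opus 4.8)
The plan is to run Linnik's dispersion method in its classical form. Write the left-hand side of \eqref{eq:maineq} as $\sum_q \xi_q \Delta(q)$, where $\xi_q$ are complex numbers of modulus $1$ chosen to remove the absolute value, and $\Delta(q)$ is the difference between the congruence sum and the main term. Opening the sum over $q$ and expanding the square after an application of Cauchy--Schwarz in the $m$-variable (so as to keep the Siegel--Walfisz sequence $\beta_n$ intact and bilinearize in $n$), one is led to estimate a dispersion sum of the shape $\sum_{m}\tau_k(m)^2 \bigl|\sum_{Q\le q\le 2Q}\sum_{mn\equiv a(q)}\xi_q\beta_n - (\text{main term})\bigr|^2$; squaring out produces three pieces, a diagonal-type main term, and two off-diagonal terms $S_1$ (the genuine bilinear sum) and $S_0$ (a cross term with the main term). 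The main terms must be shown to cancel up to $O(X^2(\log X)^{-A})$; this is where the Siegel--Walfisz hypothesis on $\boldsymbol\beta$ is consumed, exactly as in \cite{FouRadz2} and the original arguments of Linnik and of Bombieri--Friedberg--Iwaniec.

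The heart of the matter is the off-diagonal term. After expanding, one has to bound a sum over two moduli $q_1, q_2\in[Q,2Q]$ and two variables $n_1,n_2\in[N,2N]$ with $m n_1\equiv a\ (q_1)$, $m n_2\equiv a\ (q_2)$, summed against $\tau_k(m)^2$ over $m\in[M,2M]$. The congruence conditions pin down $m$ modulo $q_1 q_2/(q_1,q_2)$; using reciprocity one rewrites the resulting exponential sum as a sum involving Kloosterman fractions $e(a\overline{q_1 q_2} n\, /\, \text{something})$ — more precisely, completing the $m$-sum by Poisson/Fourier produces a trilinear form in the Kloosterman fractions $e\!\left(\frac{h\,\overline{n\, q_2}}{q_1}\right)$ (or a variant thereof) over the three ranges of $h$, $n$ and $q$. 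This is precisely the input for which the Bettin--Chandee bound for trilinear forms in Kloosterman fractions is invoked. Feeding their estimate into the dispersion sum and optimizing the length of the dual $h$-sum against the lengths $M$, $N$, $Q$ gives the admissible range $N^{56/23}X^{-17/23+\varepsilon}\le Q\le NX^{-\varepsilon}$; the exponents $56/23$ and $17/23$ are simply what drop out of the Bettin--Chandee exponents after this optimization.

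In carrying this out I would proceed in the following order: (1) reduce \eqref{eq:maineq} to the case where $\alpha_m$ is replaced by $\tau_k(m)$ times a unimodular weight and set up the Cauchy--Schwarz and squaring, isolating the three sums; (2) evaluate the main (diagonal) contribution, invoking the Siegel--Walfisz condition on $\boldsymbol\beta$ and a standard treatment of the $\tau_k(m)^2$ average; (3) treat the cross term $S_0$, which after opening is again controlled by Siegel--Walfisz together with a Kloosterman-type bound or trivial estimate in the relevant range; (4) attack the genuine bilinear term $S_1$: apply reciprocity to convert the congruence $m\equiv \text{(stuff)}\ (q_1q_2/(q_1,q_2))$ into Kloosterman fractions, complete the sum over $m$, and recognize the resulting object as a trilinear form in Kloosterman fractions; (5) apply the Bettin--Chandee bound and optimize. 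Throughout one must carefully handle the common-divisor $(q_1,q_2)=d$ by summing over $d$ and checking that the small-$d$ and $d=q_1=q_2$ (diagonal in $q$) contributions are acceptable. The main obstacle I expect is step (4): getting the bilinear/trilinear form into exactly the shape required by Bettin--Chandee — in particular managing the denominators, the coprimality conditions, the weight $\tau_k(m)^2$ which is not perfectly smooth (requiring a preliminary decomposition of $\tau_k$ into a smooth bilinear-type piece plus a small error, or a hyperbola-type splitting), and ensuring uniformity in $a$ up to $|a|\le X$ — and then verifying that the numerology of the optimization really yields the stated range rather than a narrower one. The role of $\delta_0$ in the abstract corresponds to the fact that, when $M=X^{1/2-\delta}$ and $N=X^{1/2+\delta}$, the inequality $N^{56/23}X^{-17/23+\varepsilon}\le NX^{-\varepsilon}$ forces $\delta$ to be smaller than an explicit positive constant.
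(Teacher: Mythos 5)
There is a genuine structural gap in your off-diagonal treatment. You apply Cauchy--Schwarz in $m$ alone, keeping the $q$-sum (with unimodular $\xi_q$) inside the square, so that expanding produces two moduli $q_1,q_2\sim Q$ and the congruences $mn_1\equiv a \pmod{q_1}$, $mn_2\equiv a\pmod{q_2}$, i.e. $m$ pinned modulo $[q_1,q_2]\asymp Q^2$. In the range of the theorem $Q^2>X\gg M$, so completing the $m$-sum gives no usable main term and a dual frequency range of length about $Q^2/M\asymp X^{1/2+3\delta}$, with phases to modulus $\asymp Q^2$ whose ``numerator'' involves the inverse of the \emph{product} $n_1q_2$ (or $n_2q_1$). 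That object is not a trilinear form in Kloosterman fractions in the sense of Bettin--Chandee: their bound requires the modulus to be one of the three summation variables, with sizes comparable to the other variables ($m\asymp n$), and arbitrary coefficients attached to each single variable; here the modulus $\asymp Q^2$ dwarfs every other range and the coefficient would have to depend on the pair $(n_1,q_2)$, not on their product. This two-moduli regime is precisely the one where Bombieri--Friedlander--Iwaniec had to invoke Deshouillers--Iwaniec/Kuznetsov-type Kloosterman averages, and your assertion that ``the exponents $56/23$ and $17/23$ simply drop out'' of Bettin--Chandee in this setup is unsupported -- no optimization is exhibited, and with your parameters none would produce the stated range.

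What the paper actually does is different at exactly this point, and the difference is what makes Bettin--Chandee applicable. The Cauchy--Schwarz is taken over the pair $(q,m)$ jointly (costing only $\sum_{q\sim Q}\sum_{m\sim M}\tau_k(m)^2\ll MQ\mathcal L^{k^2-1}$), so after squaring there is a \emph{single} modulus $q$ and two copies $n_1,n_2$ with $m\equiv a\overline{n_1}\equiv a\overline{n_2}\pmod q$; Poisson in $m$ modulo $q$ then has the short dual range $H=QX^{\varepsilon}/M$, and the compatibility $n_1\equiv n_2\pmod q$ is exploited by the switching step $n_1-n_2=qr$, replacing the sum over $q$ by a sum over $|r|\le 2N/Q$. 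Two applications of reciprocity convert the phase into $e\bigl(ahr\,\overline{dd_1\nu_2}/\nu_1'\bigr)$, a genuine trilinear form in $(h,\nu_1',\nu_2)$ with modulus $\nu_1'\sim N$ comparable to $\nu_2\sim N$, to which Lemma \ref{trilinear} applies with $A\to H$, $M\to N$, $N\to N$ and $\vartheta=ar$ (the factor $(1+|\vartheta|A/N^2)^{1/2}$ being controlled by $|a|\le X$ and the smallness of $H$); the system \eqref{system} of inequalities coming from this choice is what yields $Q>N^{56/23}X^{-17/23+\varepsilon}$. Your outline is missing both the single-modulus form of the dispersion and the switching of $q$ to $r$, and without them the reduction to Bettin--Chandee does not go through. (Your treatment of the main terms is in the right spirit: the paper cancels them via the Barban--Davenport--Halberstam theorem, which is where the Siegel--Walfisz hypothesis enters.)
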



Setting $N = X^{1/2 + \delta}$ and $M = X^{1/2 - \delta}$ in Theorem \ref{thm:main} it follows from Theorem \ref{thm:main} and the Bombieri-Vinogradov theorem that \eqref{eq:maineq} holds  for
all $Q \leq NX^{-\varepsilon}$ with $0\leq \delta < \delta_0:=\frac{1}{112}.$
  Previously the existence of such a $\delta_0 > 0$ was established conditionally on Hooley's $R^{\star}$ conjecture on cancellations in short incomplete Kloosterman sums in \cite[Th\' eor\`eme 1]{FoActaArith} and in that case one can take $\delta_0 = \frac{1}{14}$. Similarly to our previous paper, we use the work of Bettin-Chandee \cite{Be-Ch} and Duke-Friedlander-Iwaniec \cite{D-F-I} as an unconditional substitute for Hooley's $R^{\star}$ conjecture. In fact the proof of Theorem \ref{thm:main} follows closely the proof of the conditional result in \cite[Th\'eor\`eme 1]{FoActaArith} up to the point where Hooley's $R^{\star}$ conjecture is applied. Incidentally we notice that the largest $Q$ that Theorem \ref{thm:main} allows to take is $Q = X^{17/33 - 5 \varepsilon}$ provided that one chooses $N = X^{17/33 - 4 \varepsilon}$. 

Unfortunately the type-II sums that our Theorem \ref{thm:main} allows to estimate are too narrow to make Theorem \ref{thm:main} widely applicable in many problems (however see \cite{Tao} for an interesting connection with cancellations in character sums). We record nonetheless below one corollary, which is related to Titchmarsh's divisor problem concerning the estimation of $\sum_{p \leq x} \tau_2(p - 1)$ (for the best results on this problem see \cite[Corollaire  2]{FoCrelle}, \cite[Corollary 1]{BFI1} and \cite{Drappeau}). The proof of the Corollary below will be given in \S \ref{proofcorollary}. 

    \begin{corollary}\label{appli} Let $k \geq 1$ and 
  let $\boldsymbol \alpha$ and $\boldsymbol \beta$ be two sequences of real numbers as in  Theorem  \ref{thm:main}. Let $\delta$ be a constant satisfying
  $$
  0 < \delta < \frac{1}{112},
  $$
  and let $$ X\geq 2,\ M=X^{1/2-\delta}, \text{ and } N=X^{1/2 + \delta}.
  $$
 Then  for every $A >0$ we have the equality
\begin{equation*} 
  \sum_{m\sim M} \sum_{n \sim N} \alpha_m \beta_n \tau_2 (mn-1)
  = 2 \sum_{q\geq 1 } \frac{1}{\varphi (q)} \underset{\substack{m\sim M, n\sim N \\ mn >q^2 \\ (mn,q) =1}} {\sum \sum} \alpha_m \beta_n
  +
  O \bigl( X (\log X)^{-A}\bigr).
   \end{equation*}
  \end{corollary}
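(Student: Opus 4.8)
The plan is to open the divisor function by Dirichlet's hyperbola method, which rewrites the left--hand side in terms of the quantities estimated by Theorem~\ref{thm:main}, and then to deal with the ensuing truncation $mn>q^{2}$ by a contour integral. Concretely, I would apply the identity $\tau_{2}(\ell)=2\sum_{d\mid\ell,\ d^{2}\le\ell}1-\mathbf 1_{\ell=\square}$ with $\ell=mn-1$. The pairs with $mn-1$ a perfect square contribute $O(X^{1/2+\varepsilon})$, since the number of such $(m,n)$ is $\ll X^{1/2+\varepsilon}$, so they are negligible. Interchanging summations, noting that $d^{2}\le mn-1$ is equivalent to $mn>d^{2}$ and that $mn\le 4X$ forces $d<2\sqrt X$, and observing that on the right--hand side of the Corollary the inner sum vanishes for $q>2\sqrt X$, the Corollary reduces to the estimate
\[
\Sigma:=\sum_{1\le q\le 2\sqrt X}\Big|\sum_{\substack{m\sim M,\ n\sim N\\ mn\equiv 1\ \pmod q\\ mn>q^{2}}}\alpha_{m}\beta_{n}-\frac1{\varphi(q)}\sum_{\substack{m\sim M,\ n\sim N\\ (mn,q)=1\\ mn>q^{2}}}\alpha_{m}\beta_{n}\Big|\ \ll_{A}\ X(\log X)^{-A}.
\]

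To remove the condition $mn>q^{2}$ I would use a truncated Perron formula with abscissa $c=1/\log X$ and height $T=(\log X)^{C_{1}}$, writing $\mathbf 1_{mn>q^{2}}=\tfrac1{2\pi i}\int_{c-iT}^{c+iT}(mn/q^{2})^{s}\tfrac{ds}{s}+E(m,n,q)$, where $(mn/q^{2})^{c}\ll 1$ and $E(m,n,q)\ll(T|\log(mn/q^{2})|)^{-1}\ll Z/T$ whenever $|mn-q^{2}|>X/Z$, with $Z=(\log X)^{C_{2}}$. The Perron errors with $|mn-q^{2}|>X/Z$ contribute $\ll(Z/T)\,X(\log X)^{2k-1}$ in total, while the pairs with $|mn-q^{2}|\le X/Z$, together with the corresponding part of the Perron approximation, are controlled by
\[
\sum_{q\asymp\sqrt X}\Big(\sum_{\substack{mn\equiv 1\ \pmod q\\ |mn-q^{2}|\le X/Z}}\tau_{k}(m)\tau_{k}(n)+\frac1{\varphi(q)}\sum_{\substack{(mn,q)=1\\ |mn-q^{2}|\le X/Z}}\tau_{k}(m)\tau_{k}(n)\Big)\ \ll\ \frac{X}{Z}(\log X)^{2k-2}.
\]
Here only $q\asymp\sqrt X$ occurs, and for each such $q$ and each $m$ the admissible $n$ lie in an interval of length $\asymp N/Z$; since $\delta>0$ this length exceeds $X^{\delta/2}q$, so imposing $n\equiv\overline m\pmod q$ (respectively the factor $\varphi(q)^{-1}\asymp q^{-1}$) saves a factor $\asymp q$ that offsets the $\asymp\sqrt X$ admissible moduli, and the divisor sums over the resulting long intervals behave as expected. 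Choosing $C_{2}$, and then $C_{1}$, large enough in terms of $A$ and $k$ makes all of these errors $\ll X(\log X)^{-A}$, and since $q^{-2c}\le 1$ and $\int_{-T}^{T}|c+it|^{-1}\,dt\ll\log\log X$, it remains to prove that, uniformly for $|t|\le T$,
\[
\sum_{1\le q\le 2\sqrt X}\Big|\sum_{mn\equiv 1\ \pmod q}\alpha_{m}m^{c+it}\beta_{n}n^{c+it}-\frac1{\varphi(q)}\sum_{(mn,q)=1}\alpha_{m}m^{c+it}\beta_{n}n^{c+it}\Big|\ \ll\ X(\log X)^{-A-2}.
\]

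Since $c=1/\log X$ we have $|\alpha_{m}m^{c+it}|\ll\tau_{k}(m)$ and $|\beta_{n}n^{c+it}|\ll\tau_{k}(n)$, so up to a harmless absolute constant the size hypotheses of Theorem~\ref{thm:main} are met; writing $n^{c+it}=n^{c}\cos(t\log n)+in^{c}\sin(t\log n)$ (and likewise for $m^{c+it}$) and expanding the product into its four real components, partial summation together with the bound $\ll 1+|t|$ for the total variation of $u\mapsto u^{c}\cos(t\log u)$ and $u\mapsto u^{c}\sin(t\log u)$ on dyadic intervals shows that every $\beta$--component is Siegel--Walfisz uniformly for $|t|\le T=(\log X)^{C_{1}}$ (the factor $1+|t|$ being absorbed by lowering the Siegel--Walfisz exponent by $C_{1}$), while every $\alpha$--component is bounded by $\tau_{k}$. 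Splitting $[1,2\sqrt X]$ into $O(\log X)$ dyadic ranges $Q\le q\le 2Q$ — all admissible because $2\sqrt X\le NX^{-\varepsilon}$ for $\varepsilon$ small and, by the remarks following Theorem~\ref{thm:main}, \eqref{eq:maineq} holds for every $Q\le NX^{-\varepsilon}$ exactly when $0<\delta<\tfrac1{112}$ — and applying \eqref{eq:maineq} with $a=1$ to each of the four products gives the bound above uniformly in $|t|\le T$, once the exponent in \eqref{eq:maineq} is taken large enough to absorb both the $O(\log X)$ loss from the dyadic decomposition and the dependence on $t$. Feeding this back into $\Sigma$ and using $\int_{-T}^{T}|c+it|^{-1}\,dt\ll\log\log X$ yields $\Sigma\ll X(\log X)^{-A}$, which completes the proof.

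The heart of the argument — and the only point where $\delta>0$ is needed beyond what Theorem~\ref{thm:main} itself requires — is the removal of the sharp cutoff $mn>q^{2}$ for all $q$ up to $2\sqrt X$: it must be effected by a Perron integral whose height $T$ is at most a power of $\log X$, so that the twisted coefficients $\beta_{n}n^{c+it}$ remain Siegel--Walfisz with only a $(\log X)^{O(1)}$ loss. This is possible precisely because the elementary estimate near the boundary $mn\approx q^{2}$ gains a power $X^{\delta/2}$ from the length of the relevant intervals; that gain evaporates when $M=N$, in accordance with the fact that $M=N$ is the natural limit of this whole circle of ideas.
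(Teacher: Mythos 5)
Your argument is correct, and it reaches the Corollary by a genuinely different device than the paper at the one delicate point, namely the removal of the constraint $mn>q^{2}$. The paper opens $\tau_2$ by the hyperbola method exactly as you do, treats $q\le X^{1/2}$ directly (where the constraint is vacuous), and for $X^{1/2}<q\le 2X^{1/2}$ performs a fine multiplicative dissection of the ranges of $m,n,q$ into $O\bigl((\log X)^{3B}\bigr)$ boxes of ratio $\Delta=2^{1/[\mathcal L^{B}]}$: in each box the condition $mn-1>q^{2}$ is either automatic, so that Theorem \ref{thm:main} (extended to all $Q\le NX^{-\varepsilon}$ via Bombieri--Vinogradov) applies verbatim to the restricted sequences, or the box abuts the boundary $M_{0}N_{0}\approx Q_{0}^{2}$, and there the $n$-variable is confined to an interval of length $\asymp N(\log X)^{-B}$, which is disposed of trivially by Lemma \ref{dkinarith} --- this is where $\delta>0$ enters, exactly as in your boundary zone $|mn-q^{2}|\le X(\log X)^{-C_{2}}$. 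Your truncated Perron formula of height $T=(\log X)^{C_{1}}$ replaces the dissection; it buys cleaner bookkeeping (no case analysis over boxes, no sharp truncations of the sequences), at the price of having to apply \eqref{eq:maineq} to the twisted coefficients $\alpha_{m}m^{c+it}$, $\beta_{n}n^{c+it}$: since Theorem \ref{thm:main} is stated for real sequences with an unspecified dependence of the implied constant on $(\boldsymbol\alpha,\boldsymbol\beta)$, you must, as you indicate, split into the four real components and use that the constant in \eqref{eq:maineq} --- and in the Bombieri--Vinogradov-type input covering $Q\le X^{1/2}(\log X)^{-B}$ --- depends on $\boldsymbol\beta$ only through $k$ and its Siegel--Walfisz constants, which the twist degrades only by a factor $1+|t|\le(\log X)^{C_{1}}$, a loss absorbed as you say; this uniformity holds by inspection of the proof (the Siegel--Walfisz hypothesis enters only through the Barban--Davenport--Halberstam step, Lemma \ref{Ba-Da-Ha}), and the paper needs the same kind of uniformity for its box-restricted sequences, so this is a checkable detail rather than a gap. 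With that noted, your choices $c=1/\log X$, $Z=(\log X)^{C_{2}}$, $T=(\log X)^{C_{1}}$, and the trivial estimate near $mn\approx q^{2}$ via Lemma \ref{dkinarith} (legitimate because $N/Z\gg qX^{\varepsilon}$ precisely when $\varepsilon<\delta$) do yield the Corollary, with both routes ultimately resting on the same two inputs: the extended range $Q\le NX^{-\varepsilon}$ valid for $0<\delta<\tfrac1{112}$, and a trivial divisor bound in the narrow region where $mn$ is close to $q^{2}$.
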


      

\section{Conventions and lemmas}

\subsection{Conventions} For  $M$ and $N\geq 1$, we put $X=MN$ and $\mathcal L =\log 2X$.
Whenever it appears in the subscript of a sum the notation $n \sim N$ will means $N \leq n < 2N$. Given an integer $a \not= 0$ and  two sequences  $\boldsymbol \alpha = (\alpha_m)_{M \leq m < 2M}$ and $\boldsymbol \beta = (\beta_n)_{N \leq n < 2N}$ supported respectively on $[M, 2M]$ and $[N, 2N]$ we define the discrepancy

\begin{equation*}
E (\boldsymbol \alpha, \boldsymbol \beta, M, N, q,a) :=
\underset{\substack{m\sim M, n\sim N \\ mn\equiv a \bmod q}}{\sum \sum} \alpha_m \beta_n -\frac{1}{\varphi (q)}
 \underset{\substack{m\sim M, n\sim N \\ (mn, q)=1}}{\sum \sum} \alpha_m \beta_n,  
\end{equation*}
and we also define the mean-discrepancy,
\begin{equation} \label{defDelta}
\Delta (\boldsymbol \alpha, \boldsymbol \beta, M, N, q, a) := \sum_{\substack{q \sim Q \\ (q,a)=1}} |E ( \boldsymbol \alpha, \boldsymbol \beta, M, N, q, a)|.
\end{equation}

Throughout $\eta$ will denote any positive number
the value of which may change at each occurence. The dependency on $\eta$ will not be recalled in the $O$ or $\ll$--symbols. Typical examples are  $\tau_k (n) = O(n^\eta)$ or $(\log x)^{10} =O (x^\eta)$, uniformly for $x\geq 1$.

If $f$ is a smooth real function, its Fourier transform is defined by
$$
\hat f (\xi) = \int_{-\infty}^\infty f(t) e( -\xi t) \, {\rm d} t,
$$
where $e(\cdot)= \exp (2 \pi i \cdot).$

\subsection{Lemmas}
Our first lemma is  a  classical finite version of  the Poisson summation formula 
in arithmetic progressions, with a good error term.

\begin{lemma} \label{existenceofpsi}There exists  
 a  smooth function $\psi\ : \ \mathbb R \longrightarrow \mathbb R^+$, with compact support equal to $[1/2, 5/2]$,  larger than the characteristic function of the interval $[1,2]$, equal to $1$ on this interval 
such that,   uniformly for  integers $a$ and $q \geq 1$,  for  $M \geq 1$ and $H\geq  (q/M)\log ^4 2M$
one has the equality
\begin{equation}\label{ineqpsi}
\sum_{m\equiv a \bmod q} \psi \Bigl( \frac{m}{M}\Bigr) =\hat{\psi}(0)  \frac {M}{q}
+ \frac{M}{q} \sum_{0 < \vert h \vert \leq H}e \bigl( \frac{ ah}{q} \bigr)\hat \psi \Bigl( \frac{h}{q/M}\Bigr) +O(M^{-1}).
\end{equation}
Furthermore, uniformly for $q\geq 1$ and $M\geq 1$ one has the equality
\begin{equation}\label{Poissoncoprime}
\sum_{(m,q)=1} \psi \Bigl( \frac{m}{M}\Bigr) =\frac{\varphi (q)}{q}\hat{\psi}(0)  M + O \bigl(\tau_2 (q) \log^4 2M \bigr).
\end{equation}

\end{lemma}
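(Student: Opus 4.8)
The plan is to fix, once and for all, a single function $\psi$ with all the stated elementary properties \emph{and} a Fourier transform decaying sub-exponentially, and then to read both identities off from ordinary Poisson summation; the only genuinely delicate point is the construction of $\psi$. A generic smooth bump supported on $[1/2,5/2]$, equal to $1$ on $[1,2]$ and $\ge\mathbf 1_{[1,2]}$, only satisfies $\hat\psi(\xi)\ll_A|\xi|^{-A}$ for every $A$; but in \eqref{ineqpsi} the truncation level $H$ may be as small as $(q/M)\log^4 2M$, merely a fixed power of $\log$ above the trivial cutoff $q/M$, so with only polynomial decay of $\hat\psi$ one is left with an error of order $\log^{-A}2M$ rather than $M^{-1}$. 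What is needed is a bound of the shape $|\hat\psi(\xi)|\ll\exp(-c|\xi|^{1/2})$, so that $\hat\psi$ at arguments of size $\ge\log^4 2M$ is already smaller than every fixed power of $M$. Such $\psi$ exist: fix a non-negative $C^\infty$ function $\rho$ supported on $[-\tfrac14,\tfrac14]$, strictly positive on $(-\tfrac14,\tfrac14)$, with $\int\rho=1$ and $|\hat\rho(\xi)|\ll\exp(-c|\xi|^{1/2})$ --- e.g.\ a suitably scaled infinite convolution of the functions $\tfrac1{2\delta_n}\mathbf 1_{[-\delta_n,\delta_n]}$ with $\delta_n\asymp n^{-2}$, a classical Denjoy--Carleman-type construction --- and set $\psi:=\mathbf 1_{[3/4,\,9/4]}\ast\rho$. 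Then $\psi\ge0$, $\psi\in C^\infty$, $\psi\equiv1$ on $[1,2]$, $\mathrm{supp}\,\psi=[1/2,5/2]$, $\psi\ge\mathbf 1_{[1,2]}$, and $\hat\psi=\widehat{\mathbf 1_{[3/4,9/4]}}\cdot\hat\rho$ inherits $|\hat\psi(\xi)|\ll\exp(-c|\xi|^{1/2})$.

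To prove \eqref{ineqpsi} I would write $m=a+qn$ and apply Poisson summation in $n\in\Z$ to the Schwartz function $n\mapsto\psi((a+qn)/M)$; the substitution $u=(a+qn)/M$ turns this into the \emph{exact} identity
\[
\sum_{m\equiv a\bmod q}\psi(m/M)=\frac Mq\sum_{h\in\Z}e(ah/q)\,\hat\psi(hM/q).
\]
The term $h=0$ contributes $\hat\psi(0)M/q$, and the whole content is to control the tail $|h|>H$. Since $|h|>H\ge(q/M)\log^4 2M$ forces $|hM/q|>\log^4 2M$ and the arguments $hM/q$ are spaced by $M/q$, comparing the tail sum with $\int\exp(-c\sqrt t)\,dt$ gives $(M/q)\sum_{|h|>H}|\hat\psi(hM/q)|\ll(M/q+\log^2 2M)\exp(-c\log^2 2M)$, which is $O(M^{-1})$ uniformly for $M\ge1$; for bounded $M$ the left-hand side of \eqref{ineqpsi} has only $O(1)$ terms and the statement is trivial there.

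For \eqref{Poissoncoprime} I would detect coprimality through $\mathbf 1_{(m,q)=1}=\sum_{d\mid(m,q)}\mu(d)$, interchange summations, and reduce to $\sum_{d\mid q}\mu(d)\sum_{n\in\Z}\psi(dn/M)$. Poisson summation in $n$ yields $\sum_n\psi(dn/M)=\hat\psi(0)M/d+(M/d)\sum_{h\ne0}\hat\psi(hM/d)$; for $d\le M$ the decay of $\hat\psi$ makes $\sum_{h\ne0}|\hat\psi(hM/d)|\ll\exp(-c\sqrt{M/d})$, so that $(M/d)\sum_{h\ne0}\hat\psi(hM/d)\ll(M/d)\exp(-c\sqrt{M/d})\ll1$, while for $d>M$ both $\sum_n\psi(dn/M)$ and $\hat\psi(0)M/d$ are visibly $O(1)$; hence $\sum_n\psi(dn/M)=\hat\psi(0)M/d+O(1)$ in every case. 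Summing over $d\mid q$ and using $\sum_{d\mid q}\mu(d)/d=\varphi(q)/q$ produces $\hat\psi(0)\tfrac{\varphi(q)}{q}M+O(\tau(q))$, which is the asserted estimate since $\tau(q)=\tau_2(q)\ll\tau_2(q)\log^4 2M$.

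The main obstacle is thus confined to the first paragraph: exhibiting a compactly supported $\psi$ with support exactly $[1/2,5/2]$, a flat plateau equal to $1$ on $[1,2]$, and a Fourier transform decaying faster than every power of $|\xi|$ --- at the Gevrey rate $\exp(-c|\xi|^{1/2})$ --- which is precisely why an ordinary smooth bump does not suffice. Granting such a $\psi$, both \eqref{ineqpsi} and \eqref{Poissoncoprime} are routine applications of Poisson summation and M\"obius inversion, the only residual care being uniformity of the error terms down to $M=1$ (where $\log^4 2M$ can be $<1$ and $q$ can greatly exceed $M$, but then every sum in sight has bounded length and the estimates become trivial).
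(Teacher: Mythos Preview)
Your argument is correct and complete. The paper does not give an independent proof of this lemma: it simply refers to \cite[Lemma 2.1]{FouRadz2} (itself inspired by \cite[Lemma 7]{B-F-I2}), so you have in fact supplied what the paper only cites. Your approach---a Gevrey-type bump $\psi=\mathbf 1_{[3/4,9/4]}*\rho$ with $\hat\rho(\xi)\ll\exp(-c|\xi|^{1/2})$, followed by Poisson summation and a tail estimate---is exactly the mechanism behind those references; in particular you have correctly identified the essential point, namely that polynomial decay of $\hat\psi$ would be too weak to absorb a truncation only a power of $\log$ above $q/M$.

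Two small cosmetic remarks. First, your sentence ``for bounded $M$ the left-hand side has only $O(1)$ terms and the statement is trivial'' is unnecessary: your tail bound $(1+\log^2 2M)\exp(-c\log^2 2M)$ already satisfies $\ll M^{-1}$ uniformly for all $M\ge 1$, since $(1+L^2)e^{L-cL^2}$ is bounded for $L=\log 2M\ge\log 2$. Second, in \eqref{Poissoncoprime} your argument actually yields the slightly sharper error $O(\tau_2(q))$ rather than $O(\tau_2(q)\log^4 2M)$; the extra $\log^4 2M$ in the statement is harmless slack.
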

\begin{proof}  See Lemma 2.1 of \cite{FouRadz2}, inspired by \cite[Lemma 7]{B-F-I2}.
\end{proof}
We now recall a classical lemma on the average behavior of the $\tau_k$-function in arithmetic progressions (see \cite[Lemma 1.1.5]{Li}, for instance).
\begin{lemma}\label{dkinarith} 
 For every $k\geq 1$,  for every $\varepsilon >0$, there exists $C(k,\varepsilon)$ such that, for every $x \geq 2$,  for every $x^\varepsilon < y < x$, for every $1\leq q \leq  yx^{ -\varepsilon}$, for every  integer $a$ coprime with $q$, one has the inequality $$
\sum_{\substack{x-y <  n \leq x \\ n \equiv a \bmod q}} \tau_k (n) \leq C({k,  \varepsilon})\frac{ y }{\varphi (q)}(\log 2x)^{k-1}. 
$$
\end{lemma}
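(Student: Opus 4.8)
The plan is to deduce the bound from Shiu's Brun--Titchmarsh theorem for non-negative multiplicative functions, applied to $f=\tau_k$. Recall that this theorem asserts the following: if $f\ge 0$ is multiplicative, $f(p^\ell)\le A_1^\ell$ for all primes $p$ and integers $\ell\ge 1$, and $f(n)=O_\eta(n^\eta)$ for every $\eta>0$, then, for any fixed $0<\beta<1/2$ and $0<\gamma<1$, uniformly for $(a,q)=1$, $x^\beta\le y\le x$ and $q\le y^{1-\gamma}$,
\[
  \sum_{\substack{x-y<n\le x\\ n\equiv a\bmod q}} f(n)\ \ll_{\beta,\gamma,A_1}\ \frac{y}{\varphi(q)}\cdot\frac{1}{\log x}\exp\Big(\sum_{\substack{p\le x\\ p\nmid q}}\frac{f(p)}{p}\Big).
\]

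First I would verify that $\tau_k$ is admissible: since $\tau_k(p^\ell)=\binom{\ell+k-1}{k-1}\le(\ell+1)^{k-1}\le 2^{(k-1)\ell}$ one may take $A_1=2^{k-1}$, and $\tau_k(n)=O_\eta(n^\eta)$ is classical. Next I would reduce to the case $\varepsilon<1/2$: for $x\ge 1$ and $0<\varepsilon'\le\varepsilon$ the hypotheses ``$x^\varepsilon<y$ and $q\le yx^{-\varepsilon}$'' imply ``$x^{\varepsilon'}<y$ and $q\le yx^{-\varepsilon'}$'', so a proof for any fixed $\varepsilon'\in(0,1/2)$ yields the statement for all $\varepsilon$ (with the same constant); we may therefore assume $0<\varepsilon<1/2$. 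I would then check that Shiu's admissibility conditions on $(y,q)$ hold with $\beta=\gamma=\varepsilon$: indeed $x^\varepsilon<y\le x$ is given, and since $y\le x\ge 2$ one has $q\le yx^{-\varepsilon}\le y\cdot y^{-\varepsilon}=y^{1-\varepsilon}$.

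The last step is to evaluate the exponential factor. As $\tau_k(p)=k$ for every prime $p$, Mertens' theorem gives $\sum_{p\le x,\,p\nmid q}\tau_k(p)/p\le k\sum_{p\le x}1/p\le k\log\log x+O(k)$, whence $\exp\big(\sum_{p\le x,\,p\nmid q}\tau_k(p)/p\big)\ll_k(\log 2x)^{k}$ for all $x\ge 2$. Substituting into Shiu's bound and using $\log 2x\le 2\log x$ for $x\ge 2$ yields
\[
  \sum_{\substack{x-y<n\le x\\ n\equiv a\bmod q}}\tau_k(n)\ \ll_{k,\varepsilon}\ \frac{y}{\varphi(q)}\cdot\frac{(\log 2x)^{k}}{\log x}\ \ll_{k,\varepsilon}\ \frac{y}{\varphi(q)}(\log 2x)^{k-1},
\]
which is the assertion, with $C(k,\varepsilon)$ the implied constant.

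The content of the lemma lies entirely in Shiu's theorem, since it must be invoked for the full range $q\le yx^{-\varepsilon}$, i.e.\ for moduli that may be almost as large as $y$; in that regime $(x-y,x]$ meets the progression in only about $y/q\ge x^\varepsilon$ integers, and one is claiming that $\tau_k$ has average order $(\log 2x)^{k-1}$ even over this sparse set. The naive Dirichlet hyperbola argument (write $\tau_k=\tau_{k-1}\ast 1$, split the divisor at $x^{1/2}$, induct on $k$) does not suffice: it leaves behind an error of size $\asymp x^{1/2}$, which swamps $\frac{y}{\varphi(q)}(\log 2x)^{k-1}\ge x^\varepsilon$ once $\varepsilon<1/2$. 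Overcoming this — by a more careful organisation of the divisor ranges, essentially a short sieve — is exactly what Shiu's proof does, and is the only genuine difficulty; if one only needed the lemma in the restricted range $q\le yx^{-1/2}$, the hyperbola method together with induction on $k$ would already give it.
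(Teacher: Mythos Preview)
Your proof is correct. The paper does not actually prove this lemma; it simply cites it as classical, referring the reader to Linnik's book. Your derivation via Shiu's Brun--Titchmarsh theorem for non-negative multiplicative functions is the standard modern route to this estimate, and all the verifications (the admissibility of $\tau_k$, the reduction to $\varepsilon<1/2$, the passage from $q\le yx^{-\varepsilon}$ to $q\le y^{1-\varepsilon}$, and the evaluation of the exponential factor via Mertens) are carried out correctly. One cosmetic remark: the inequality $\binom{\ell+k-1}{k-1}\le(\ell+1)^{k-1}$ you write in passing is not literally true for all $k,\ell$ (e.g.\ $k=4$, $\ell=1$ gives $4\not\le 8$? actually that's fine; but $k$ large and $\ell=1$ gives $k$ versus $2^{k-1}$, which is fine), though the conclusion that $\tau_k(p^\ell)\le A_1^\ell$ for some $A_1=A_1(k)$ is of course correct since $\tau_k(p^\ell)$ grows only polynomially in $\ell$.
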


The following lemma is one of the various forms of the so--called Barban--Davenport--Halberstam Theorem (for a proof see for instance
\cite[Theorem 0 (a)]{BFI1}.
\begin{lemma}\label{Ba-Da-Ha}Let $k > 0$ be an integer. Let $\boldsymbol \beta = (\beta_{n})$ be a  Siegel--Walfisz sequence such that $|\beta_{n}| \leq \tau_k(n)$ for all integer $n \geq 1$.   Then for every $A > 0$ there exists $B=B(A)$ such that, uniformly for $N \geq 1$ one has the equality
$$
\sum_{q\leq N(\log 2N)^{-B}}\ \sum_{a,\, (a,q) =1} \Bigl\vert \sum_{\substack{n\sim N \\ n\equiv a  \bmod q}} \beta_n -\frac{1}{\varphi (q)} \sum_{\substack{n\sim N \\ (n,q) =1}} \beta_n
\Bigr\vert^2 =O_A \bigl( N (\log 2N)^{-A}\bigr).
$$
\end{lemma}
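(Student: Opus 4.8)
\medskip\noindent\textbf{Proof proposal.} The plan is to run the classical proof of a Barban--Davenport--Halberstam estimate: expand into Dirichlet characters, reduce to primitive characters, and split the conductor into a small range treated by the Siegel--Walfisz hypothesis and a large range treated by the large sieve. Put $Q:=N(\log 2N)^{-B}$. By orthogonality of characters, for $(a,q)=1$,
$$
\sum_{\substack{n\sim N\\ n\equiv a\,(q)}}\beta_n-\frac1{\varphi(q)}\sum_{\substack{n\sim N\\(n,q)=1}}\beta_n
=\frac1{\varphi(q)}\sum_{\chi\ne\chi_0}\overline{\chi}(a)\,S(\chi),\qquad S(\chi):=\sum_{n\sim N}\beta_n\chi(n),
$$
the principal character accounting exactly for the subtracted main term. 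Squaring, summing over the $\varphi(q)$ residues coprime to $q$, and using $\sum_{(a,q)=1}\overline{\chi_1}(a)\chi_2(a)=\varphi(q)\,\mathbf{1}_{\chi_1=\chi_2}$, the quantity to be estimated equals $\sum_{q\le Q}\varphi(q)^{-1}\sum_{\chi\ne\chi_0}|S(\chi)|^2$.

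Next, each $\chi\bmod q$ is induced by a unique primitive character $\chi^\ast$ of some conductor $f\mid q$, $f>1$, and $S(\chi)=\sum_{n\sim N,\,(n,q)=1}\beta_n\chi^\ast(n)$. Expanding the coprimality condition by M\"obius inversion over the divisors of $q$ gives $S(\chi)=\sum_{d\mid q}\mu(d)\chi^\ast(d)\sum_{m\sim N/d}\beta_{dm}\chi^\ast(m)$, where $|\beta_{dm}|\le\tau_k(d)\tau_k(m)$. After a Cauchy--Schwarz in $d$ and the summation over $q$ --- which, by elementary estimates for $\tau(q)/\varphi(q)$ over multiples of a fixed modulus, costs only divisor factors and powers of $\log 2N$ --- the problem is reduced to bounding, for each $d\le Q$,
$$
\Sigma_d:=\sum_{1<f\le Q}\frac1{\varphi(f)}\sum_{\chi\bmod f}^{\ast}\Bigl|\sum_{m\sim N/d}\beta_{dm}\,\chi(m)\Bigr|^2,
$$
where $\sum^{\ast}$ runs over primitive characters modulo $f$.

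I would then fix a threshold $R:=(\log 2N)^{C}$, with $C=C(A)$ to be chosen large, and split $\Sigma_d$ at $f=R$. For $f\le R$ the Siegel--Walfisz hypothesis applies directly: since $\sum_{(a,f)=1}\chi(a)=0$ for $\chi\ne\chi_0$, the inner sum equals $\sum_{(a,f)=1}\chi(a)\bigl(\sum_{m\equiv a\,(f)}\beta_{dm}-\varphi(f)^{-1}\sum_{(m,f)=1}\beta_{dm}\bigr)$, and --- taking $a$ to be the least positive residue (so $f>|a|$) and applying the hypothesis with $r=1$ --- each bracket is $O_{A'}\bigl((N/d)(\log 2N)^{-A'}\bigr)$ for every $A'$, so that the range $f\le R$ contributes negligibly once $A'$ is large in terms of $A$ and $C$. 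For $R<f\le Q$ I would invoke the large sieve inequality
$$
\sum_{f\le F}\frac f{\varphi(f)}\sum_{\chi\bmod f}^{\ast}\Bigl|\sum_{m\sim N/d}c_m\,\chi(m)\Bigr|^2\ll\Bigl(F^2+\frac{N}{d}\Bigr)\sum_{m\sim N/d}|c_m|^2,
$$
together with the routine bound $\sum_{m\sim N/d}\tau_k(m)^2\ll(N/d)(\log 2N)^{k^2-1}$ (compare Lemma~\ref{dkinarith}); decomposing dyadically in $f$ and using $\varphi(f)^{-1}\le F^{-1}\cdot f/\varphi(f)$ on $f\in(F,2F]$, the large-sieve loss $F^2+N/d$ turns into a genuine saving, bounding the range $R<f\le Q$ by $\bigl(Q+(N/d)R^{-1}\bigr)(N/d)(\log 2N)^{O_k(1)}$.

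Collecting the two ranges, summing over $d\le Q$ (which costs only a further power of $\log 2N$), and choosing in order $C=C(A)$ large, then $A'$ large, then $B=B(A)$ large, yields the stated bound. I expect the large-conductor range to be the only real difficulty: the large sieve on its own is too lossy, and it is the dyadic decomposition in the conductor --- crucially using that $f>R$ on that range --- that converts $F^2+N/d$ into a genuine saving, while one has to keep the $f/\varphi(f)$ weights and the divisor factors produced by the reduction to primitive characters under control. The remaining steps --- the character expansion, the M\"obius treatment of the coprimality condition, and the small-conductor estimate from Siegel--Walfisz --- are routine; alternatively one may simply invoke \cite[Theorem~0(a)]{BFI1}.
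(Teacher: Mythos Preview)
The paper does not prove this lemma at all; it simply refers the reader to \cite[Theorem~0(a)]{BFI1}, and your closing sentence already invokes the same reference, so on that level you match the paper exactly.

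Your detailed sketch is the standard Barban--Davenport--Halberstam argument and is essentially sound, but there is one slip in the small-conductor range. After the M\"obius expansion you apply the Siegel--Walfisz hypothesis to the dilated sequence $m\mapsto\beta_{dm}$ ``with $r=1$'', whereas the hypothesis is only assumed for $(\beta_n)_n$. Writing $n=dm$ turns $\sum_{m\sim N/d,\,m\equiv a\,(f)}\beta_{dm}$ into $\sum_{n\sim N,\,n\equiv da\,(df)}\beta_n$, and for $d>1$ one has $\gcd(da,df)=d>1$, so the residue class is not reduced and the hypothesis (which requires $(a,q)=1$) does not apply as written. The clean fix is to postpone the M\"obius step to the large-conductor range only, and for small conductors to exploit instead the coprimality parameter $r$ that is built into the paper's Siegel--Walfisz definition: for $\chi\bmod q$ induced by a primitive $\chi^\ast\bmod f$ with $f\le R$ one has
\[
S(\chi)=\sum_{(a,f)=1}\chi^\ast(a)\sum_{\substack{n\sim N\\ n\equiv a\,(f)\\ (n,q)=1}}\beta_n,
\]
and the hypothesis with modulus $f$ and $r=q$ gives each inner sum as a main term (independent of $a$) plus $O_{A'}\bigl(\tau_k(q)\,N(\log 2N)^{-A'}\bigr)$; the main terms then cancel against $\sum_{(a,f)=1}\chi^\ast(a)=0$. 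This is precisely why the definition of ``Siegel--Walfisz'' in the paper carries the extra parameter $r$.
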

We now recall an easy consequence of Weil's bound for Kloosterman sums.
\begin{lemma}\label{shortkloo}
Let $a$ and $b$ two integers $\geq 1$. Let $\mathcal I$ an interval included in $[1, a]$. Then for every integer $\ell$ for every $\varepsilon >0$  we  have the inequality
$$
\sum_{\substack{n\in \mathcal I \\ (n,ab) =1}} \frac{n}{\varphi (n)} e\Bigl(  \ell \frac{\overline n}{a}\Bigr) = O_\varepsilon \Bigl( (\ell, a)^\frac 12 (ab)^\varepsilon a^\frac12
\Bigr).
$$
\end{lemma}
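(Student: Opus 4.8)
The plan is to reduce the sum to the classical Weil bound for complete Kloosterman sums by two standard manoeuvres: first peeling off the arithmetic weight $n/\varphi(n)$ through a Dirichlet convolution, and then completing the remaining exponential sum over the interval $\mathcal I$ to a complete sum modulo $a$. We may assume $\ell\neq 0$, since for $\ell=0$ the sum has at most $a$ terms, each $\ll_\eta a^\eta$, hence is $\ll_\eta a^{1+\eta}\leq(\ell,a)^{1/2}(ab)^\varepsilon a^{1/2}$.

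First I would use the identity $n/\varphi(n)=\sum_{d\mid n}\mu^2(d)/\varphi(d)$ and interchange summations, which rewrites the sum in the Lemma as
$$\sum_{\substack{1\leq d\leq a\\ (d,ab)=1}}\frac{\mu^2(d)}{\varphi(d)}\ \sum_{\substack{m\in\Z,\ dm\in\mathcal I\\ (m,ab)=1}}e\Bigl(\ell\,\overline d\,\overline m/a\Bigr),$$
where $\overline d$ and $\overline m$ denote inverses modulo $a$ (note that $d\mid n$ and $(n,ab)=1$ force $(d,a)=1$), and $d$ is confined to $d\leq\max\mathcal I\leq a$. Since $\sum_{d\leq a}\mu^2(d)/\varphi(d)\ll\log 2a$, it suffices to bound each inner sum — running over an interval of at most $a$ consecutive integers coprime to $ab$ — by $O_\varepsilon\bigl((\ell,a)^{1/2}(ab)^\varepsilon a^{1/2}\bigr)$, uniformly over the residue class $\ell\overline d\bmod a$; here I would invoke the elementary fact that $(\ell\overline d,a)=(\ell,a)$, valid because $\overline d$ is a unit modulo $a$.

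For a fixed such inner sum I would next detect the condition $(m,b)=1$ by Möbius inversion — only primes dividing $b$ but not $a$ play a role, since $(m,a)=1$ is already imposed — which, at the cost of a factor $2^{\omega(b)}\leq\tau_2(b)$ and the substitution $m\mapsto em'$, reduces matters to sums $\sum_{m\in J,\ (m,a)=1}e(u\overline m/a)$ over subintervals $J\subseteq[1,a]$, with $(u,a)=(\ell,a)$. For these I would complete modulo $a$: expanding the characteristic function of $J$ in additive characters, $\mathbf 1_J(m)=\frac1a\sum_{c\bmod a}\widehat{\mathbf 1}_J(c)\,e(-cm/a)$, and using the standard bounds $|\widehat{\mathbf 1}_J(c)|\leq\min(|J|,\|c/a\|^{-1})$ and hence $\frac1a\sum_{c\bmod a}|\widehat{\mathbf 1}_J(c)|\ll\log 2a$, the inner sum becomes a linear combination of complete Kloosterman sums $S(c,u;a)=\sum_{m\in(\Z/a\Z)^\times}e\bigl((cm+u\overline m)/a\bigr)$. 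Weil's bound $|S(c,u;a)|\leq\tau_2(a)(c,u,a)^{1/2}a^{1/2}\leq\tau_2(a)(\ell,a)^{1/2}a^{1/2}$ then finishes this step, the sole degenerate case — $J$ a complete residue system modulo $a$, which forces the sum to be a Ramanujan sum of size $\ll(\ell,a)$ — being even more favourable.

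Collecting the accumulated losses — a factor $\log 2a$ from the sum over $d$, a factor $\tau_2(b)$ from the Möbius detection, a factor $\log 2a$ from the completion, and $\tau_2(a)$ from Weil's bound — the whole sum is $\ll(\log 2a)^2\,\tau_2(a)\tau_2(b)\,(\ell,a)^{1/2}a^{1/2}$, which is $O_\varepsilon\bigl((\ell,a)^{1/2}(ab)^\varepsilon a^{1/2}\bigr)$ because divisor functions and powers of $\log 2a$ are each $\ll_\varepsilon(ab)^\varepsilon$. The argument is entirely routine; the only step requiring any care is the bookkeeping of the coprimality condition $(n,ab)=1$ against the modulus $a$ during the completion and the application of Weil's bound, together with the observation that multiplying $\ell$ by a unit modulo $a$ leaves $(\ell,a)$ unchanged — which is precisely what preserves the factor $(\ell,a)^{1/2}$ in the final estimate.
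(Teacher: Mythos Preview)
Your argument is correct and follows essentially the same route as the paper's proof: peel off the weight $n/\varphi(n)$ by a Dirichlet convolution, reduce to incomplete Kloosterman sums, and appeal to the Weil bound via completion, with M\"obius inversion handling the extra coprimality to $b$. The only cosmetic difference is the convolution identity chosen---you use $n/\varphi(n)=\sum_{d\mid n}\mu^2(d)/\varphi(d)$, while the paper writes $n/\varphi(n)=\sum_{\kappa(\nu)\mid n}\nu^{-1}$ (summing over all $\nu$ whose squarefree kernel divides $n$); these are equivalent expansions of the same Euler product, and both lead to the same short Kloosterman sums after the substitution $n=dm$ (respectively $n=\kappa(\nu)m$).
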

\begin{proof} We begin we the case $b=1$. We write the factor $\frac{n}{\varphi (n)} $ as
$$
\frac{n}{\varphi (n)} = \sum_{\nu \mid n^\infty} \nu^{-1} =  \sum_{\kappa (\nu) \mid n } \nu^{-1},
$$
where $\kappa (\nu)$   is the largest   squarefree integer dividing $\nu$ (sometimes $\kappa (\nu)$ is called the {\it kernel } of $\nu$). This gives the equality
$$
\Bigl\vert \ \sum_{\substack{n\in \mathcal I \\ (n,a) = 1}} \frac{n}{\varphi (n)} e\Bigl(  \ell \frac{\overline n}{a}\Bigr) \ \Bigr\vert \leq 
\sum_{\nu \geq 1} \nu^{-1} \Bigl\vert \ \sum_{\substack{n\in \mathcal I  \\ \kappa (\nu) \mid  n  \\ (n,a) = 1}} 
e\Bigl(  \ell \frac{\overline n}{a}\Bigr) \ \Bigr\vert 
 = \sum_{\substack{\nu \geq 1 \\ (\nu, a)=1}} \nu^{-1} \Bigl\vert \ \sum_{\substack{m\in \mathcal I /\kappa (\nu)  \\ (m,a) = 1}} 
e\Bigl(  \ell \frac{\overline {\kappa (\nu)}\, \overline m}{a}\Bigr) \ \Bigr\vert. 
$$
In the summation we can restrict to the $\nu$ such that $\kappa (\nu) \leq a$. Applying the classical bound for short Kloosterman sums, we deduce
that 
$$
\Bigl\vert \ \sum_{\substack{n\in \mathcal I \\ (n,a) = 1}} \frac{n}{\varphi (n)} e\Bigl(  \ell \frac{\overline n}{a}\Bigr) \ \Bigr\vert \ll_\varepsilon (\ell, a)^\frac{1}{2} a^{\frac 12 + \varepsilon} \prod_{p \leq a} \Bigl(1-\frac{1}{p}\Bigr)^{-1} \ll_\varepsilon   (\ell, a)^\frac{1}{2} a^{\frac 12 + 2 \varepsilon}.
$$
This proves Lemma \ref{shortkloo} in the case where $b=1$. When $b \not=1$, we use the M\" obius inversion formula to detect the condition $(n,b)=1$.
\end{proof}

Our central tool is a bound for trilinear forms for Kloosterman fractions, due to Bettin and Chandee  \cite[Theorem 1]{Be-Ch}. The result of Bettin-Chandee builds on work of Duke-Friedlander-Iwaniec \cite[Theorem 2]{D-F-I} who considered the case of bilinear forms.
These two papers show cancellations in  exponential sums involving Kloosterman fractions $e(a \overline{m} / n)$ with $m \asymp n$. We state below the main theorem of Bettin-Chandee.

 \begin{lemma}\label{trilinear} For every $\epsilon >0$ there exists $C(\varepsilon)$ such that
 for every non zero integer $\vartheta$, for every sequences  let $\boldsymbol \alpha$, $\boldsymbol \beta$ and $\boldsymbol \nu$ be   of complex numbers, for every $A$, $M$ and $N\geq 1$, one has the inequality
 \begin{multline*}
\Bigl\vert \, \sum_{a\sim A} \sum_{m\sim M} \sum_{n \sim N} \alpha (m) \beta (n) \nu (a) e \Bigl(\vartheta \frac{a \overline m}{n} \Bigr)\,   \Bigr\vert \leq C(\varepsilon) \Vert \boldsymbol \alpha\Vert_{2, M} \, \Vert \boldsymbol \beta\Vert_{2, N} \, \Vert \boldsymbol \nu\Vert_{2, A} 
\\
\times  \Bigl( 1+ \frac{\vert \vartheta\vert A}{MN}\Bigr)^\frac{1}{2}  \Bigl( (AMN)^{\frac{7}{20} + \varepsilon} \, (M+N) ^\frac{1}{4} +(AMN)^{\frac{3}{8} +\varepsilon} (AM+AN) ^\frac{1}{8}
\Bigr). 
 \end{multline*}
 \end{lemma}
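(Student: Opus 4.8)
The plan is to reduce the trilinear sum $S$ (the left-hand side above), at the cost of one or two Cauchy--Schwarz inequalities, to the \emph{bilinear} Kloosterman-fraction estimate of Duke--Friedlander--Iwaniec \cite{D-F-I} — their Theorem~2, which bounds $\sum_m\sum_n\alpha_m\beta_n\,e(h\overline{m}/n)$ uniformly in the integer $h$ — and then to optimise the free parameters; the numbers $7/20$ and $3/8$ should drop out only of that optimisation. Throughout one interprets $\overline{m}/n$ as the real number $\{\,\overline{m}/n\,\}\in[0,1)$ with $(m,n)=1$ on the support. I would begin with the normalisations: subdivide the range $a\sim A$ smoothly into $K:=\lceil 1+|\vartheta|A/MN\rceil$ pieces of length $\asymp A/K$, apply Cauchy--Schwarz over these $K$ pieces, and thereby reduce to the case $|\vartheta|A\le MN$ while producing exactly the factor $(1+|\vartheta|A/MN)^{1/2}$ — the $\ell^2$-norms of the pieces of $\boldsymbol\nu$ reassemble into $\|\boldsymbol\nu\|_{2,A}^2$, and the right-hand bracket is nonincreasing in $A$. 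On the surviving range the reciprocity identity $\overline{m}/n\equiv 1/(mn)-\overline{n}/m\pmod 1$ introduces only a factor $e(\vartheta a/(mn))$ of bounded total variation, which can be peeled off by partial summation and which also lets one assume $M\le N$ (this is why only $M+N$, not $\min(M,N)$, appears); splitting off small common divisors like $(\vartheta,mn)$ costs merely $(AMN)^{\varepsilon}$.

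Next I would detach $\boldsymbol\nu$ by Cauchy--Schwarz and open the square, obtaining
\[
|S|^2\le \|\boldsymbol\nu\|_{2,A}^2\sum_{\substack{m_1,m_2\sim M\\ n_1,n_2\sim N}}\alpha(m_1)\overline{\alpha(m_2)}\,\beta(n_1)\overline{\beta(n_2)}\,\Sigma,\qquad \Sigma:=\sum_{a\sim A}e\!\big(\vartheta a(\overline{m_1}/n_1-\overline{m_2}/n_2)\big).
\]
The diagonal $m_1=m_2,\ n_1=n_2$ gives $\Sigma=O(A)$ and hence a contribution $O\big(A\,\|\boldsymbol\alpha\|_{2,M}^2\|\boldsymbol\beta\|_{2,N}^2\big)$; a quick check shows that $A^{1/2}\|\boldsymbol\nu\|_{2,A}\|\boldsymbol\alpha\|_{2,M}\|\boldsymbol\beta\|_{2,N}$ is always dominated by the claimed bound. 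For the off-diagonal, $\Sigma$ is a short linear sum in $a$ that detects $\overline{m_1}/n_1-\overline{m_2}/n_2$ near $0\pmod 1$; the idea is to use reciprocity and the Chinese Remainder Theorem to rewrite $\overline{m_1}/n_1-\overline{m_2}/n_2$ in terms of Kloosterman fractions with moduli assembled from $n_1,n_2$ (inserting, if needed, a second Cauchy--Schwarz to free $\boldsymbol\alpha$ as well), so that the off-diagonal becomes a sum of \emph{bilinear} Kloosterman-fraction forms of exactly the shape handled in \cite{D-F-I}. It is essential here to keep the averaging over the doubled variables alive and \emph{not} to fall back on the triangle inequality or on Weil's bound alone, either of which would throw away the cancellation in the $\alpha$'s and $\beta$'s.

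To finish I would feed in the Duke--Friedlander--Iwaniec bilinear estimate — itself proved by their $\delta$-method (smooth, complete the critical sum, Poisson-summate, and bound the resulting complete Kloosterman sums by Weil), the completion's main term supplying the $\ell^2$-factors and its off-diagonal a power saving governed by $\gcd$-factors — and then balance the completion cut-off against the trivial bound. Substituting back through the reductions should produce precisely the two contributions $(AMN)^{7/20+\varepsilon}(M+N)^{1/4}$ and $(AMN)^{3/8+\varepsilon}(AM+AN)^{1/8}$, with $(AMN)^{\varepsilon}$ absorbing all divisor functions and logarithms.

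The step I expect to be the main obstacle is the passage from the trilinear sum to the bilinear one: choosing which of $\boldsymbol\alpha,\boldsymbol\beta,\boldsymbol\nu$ to open by Cauchy--Schwarz, and how to regroup the quadruple $(m_1,m_2,n_1,n_2)$ afterwards via reciprocity and the CRT, so that the genuinely bilinear DFI input applies with no loss — in particular so that the diagonal remains small and the off-diagonal retains its internal cancellation. By contrast, the normalisations of the first step, the $\delta$-method, and the concluding parameter optimisation are comparatively routine; it is this reorganisation, together with careful tracking of the reciprocity factor and the coprimality conditions, that should carry the argument.
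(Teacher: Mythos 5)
You should first be aware that the paper offers no proof of this lemma: it is quoted verbatim as the main theorem of Bettin--Chandee \cite{Be-Ch}, which builds on, but is not a formal consequence of, the bilinear estimate of Duke--Friedlander--Iwaniec \cite{D-F-I}. Your proposal is therefore an attempt to reprove Bettin--Chandee's theorem by reducing it to the bilinear DFI bound, and it has a genuine gap exactly at the step you yourself single out as the main obstacle. After you detach $\boldsymbol\nu$ by Cauchy--Schwarz, the off-diagonal kernel $\Sigma=\sum_{a\sim A}e\bigl(\vartheta a(\overline{m_1}/n_1-\overline{m_2}/n_2)\bigr)$ is a linear sum in $a$ whose phase, rewritten by CRT when $(n_1,n_2)=1$, is $\vartheta a c/(n_1n_2)\bmod 1$ with $c$ a residue class depending on all four variables $m_1,m_2,n_1,n_2$; this is not of the Kloosterman-fraction shape $e(h\,\overline{b}/n)$ with a single free inverted variable carrying an $\ell^2$-weight, so \cite[Theorem 2]{D-F-I} cannot be invoked as a black box, and no regrouping by reciprocity and CRT is known that brings it to that shape ``with no loss''. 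The two concrete ways of completing your sketch both fail to give the stated exponents: bounding $\vert\Sigma\vert\le\min(A,\Vert\cdot\Vert^{-1})$ and counting near-coincidences of the fractions $\overline{m}/n$ discards the cancellation in $\boldsymbol\alpha,\boldsymbol\beta$ (as you note yourself), while applying DFI separately for each fixed $a$ costs the full factor $\Vert\boldsymbol\nu\Vert_{1}\le A^{1/2}\Vert\boldsymbol\nu\Vert_{2,A}$ and so yields no gain in the $A$-aspect, which is precisely the point of the trilinear statement. The exponents $7/20$, $3/8$, $1/4$, $1/8$ come out of Bettin--Chandee's own analysis, carried out directly on the trilinear form (a genuinely trilinear adaptation of the DFI delta-method/completion machinery, with its own optimization); asserting that they ``should drop out'' of an optimization you never set up is where the proof is missing.

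Two smaller points. In your first reduction, the subdivision of $a\sim A$ into $K\asymp 1+\vert\vartheta\vert A/(MN)$ pieces requires the final bracket to be \emph{nondecreasing} in $A$ (so that its value at length $A/K$ is majorized by its value at length $A$), not nonincreasing as you wrote; the argument does close with the correct monotonicity, but in \cite{Be-Ch} the factor $\bigl(1+\vert\vartheta\vert A/(MN)\bigr)^{1/2}$ in fact arises from the total variation of the smooth factor $e(\vartheta a/(mn))$ produced by reciprocity inside the main argument, not from a preliminary subdivision. Finally, if your goal is only to justify the lemma as used in this paper, the correct ``proof'' is the citation to \cite[Theorem 1]{Be-Ch}; a self-contained derivation would essentially have to reproduce that paper, not merely quote \cite{D-F-I}.
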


\section{Proof of Theorem \ref{thm:main}} 
All along the proof we will suppose that the inequality $1 \leq |a| \leq X$ holds and  that we also have 
\begin{equation}\label{universal}
X ^\frac 38 \leq M\leq X^\frac 12  \leq N \text{ and }         Q\leq N.
\end{equation}
\subsection{Beginning of the dispersion} Without loss of generality we can suppose that the sequence   $\boldsymbol \beta$ satisfies the following property
\begin{equation}\label{betan=0}
n\mid a \Rightarrow \beta_n= 0.
\end{equation}
Such an assumption is justified because the contribution to $ \Delta( \boldsymbol \alpha, \boldsymbol \beta, M, N, Q,a) $ of the $(q,m,n)$ 
such that $n \mid a$ is 
$$
\ll  QX^\eta+X^\eta  \sum_{n \mid a}\sum_{\substack{m\sim M \\ mn\not= a}} \tau_2(\vert mn -a\vert) + M X^\varepsilon \ll (M+Q)X^\eta.
$$

By \eqref{defDelta}, we have the inequality 
\begin{equation*} 
 \Delta( \boldsymbol \alpha, \boldsymbol \beta, M, N, Q,a) \leq \sum_{q\sim Q} \, \sum_{\substack{m\sim M \\ (m,q)=1}} \vert \alpha_m \vert 
 \Bigl\vert  \sum_{\substack{n \sim N\\ n\equiv a \overline m\bmod q}} \beta_n - \frac{1}{\varphi (q)} \sum_{\substack{n\sim N \\ (n,q)=1}} \beta_n \Bigr\vert.
\end{equation*}
Let $\psi$ be the smooth function constructed in  Lemma \ref{existenceofpsi}.
 By  the  Cauchy--Schwarz inequality, the inequality $|\alpha_m| \leq \tau_k(m)$ and by Lemma \ref{dkinarith}  we deduce
\begin{equation}\label{CS}
  \Delta^2( \boldsymbol \alpha, \boldsymbol \beta, M, N, Q,a)  \ll MQ \mathcal L^{k^2-1}\, \Bigl\{ W(Q) -2 V(Q) +U(Q)\Bigr\},
  \end{equation}
  with
  \begin{align}
  U(Q)&= \sum_{  (q,a)=1} \frac{\psi (q/Q)}{\varphi^2 (q)}\,\Bigl( \sum_{\substack{n \sim N \\ (n,q)=1}} \beta_n \Bigr)^2 \sum_{(m,q)=1} \psi \Bigl( \frac{m}{M} \Bigr),\label{defU}\\
  V(Q)& = \sum_{ (q,a)=1} \frac{\psi (q/Q)}{\varphi (q)}\,
  \Bigl( \sum_{\substack{n_1 \sim N \\ (n_1,q)=1}} \beta_{n_1} \Bigr)
  \Bigl( \sum_{\substack{n_2 \sim N \\ (n_2,q)=1}} \beta_{n_2} \Bigr) \sum_{  m\equiv a \overline{n_1}\bmod q } \psi \Bigl( \frac{m}{M} \Bigr),\nonumber
  \\
  W(Q) & =  \sum_{(q,a)=1} \psi (q/Q) 
  \Bigl( \sum_{\substack{n_1 \sim N\\ (n_1,q)=1}} \beta_{n_1} \Bigr)
  \Bigl( \sum_{\substack{n_2 \sim N\\ (n_2,q)=1}} \beta_{n_2} \Bigr) \sum_{\substack{ m\equiv a \overline{n_1}\bmod q \\ m\equiv a \overline{n_2}\bmod q}} \psi \Bigl( \frac{m}{M} \Bigr).\label{defW}
 \end{align}

  \subsection{Study of $U(Q)$} A direct application of \eqref{Poissoncoprime} of Lemma  \ref{existenceofpsi}  in the definition \eqref{defU} gives the equality
  \begin{align}
  U(Q) &= \hat \psi (0) M  \sum_{ (q,a)=1} \frac{\psi (q/Q)}{q\varphi (q)}\,\Bigl( \sum_{\substack{n \sim N\\ (n,q)=1}} \beta_n \Bigr)^2 + O \bigl(  N^2 Q^{-1} X^\eta
  \bigr)\nonumber\\
  &= U^{\rm MT} (Q) +  O \bigl(  N^2 Q^{-1}X^\eta \label{UMT}
  \bigr),
  \end{align}
  by definition.
  \subsection{Study of $V(Q)$}  Let $\varepsilon$ be a fixed positive number. We now apply \eqref{ineqpsi} of Lemma \ref{existenceofpsi}   with 
  \begin{equation}\label{defH}
  H = M^{-1} Q X^\varepsilon.
  \end{equation}
   This  leads to the equality
\begin{equation}\label{V1}
  V(Q)= V^{\rm MT}(Q) + V^{\rm Err1} (Q) + V^{\rm Err2} (Q),
  \end{equation}
  where each of the three terms corresponds to the contribution of the three terms on the right hand--side of \eqref{ineqpsi}. We directly have the equality
  \begin{equation}\label{V2}
  V^{\rm Err2} (Q) = O \bigl(   M^{-1} N^2 X^\eta \bigr).
 \end{equation}
  For  the main term we get
\begin{equation}\label{V3}
  V^{\rm MT} (Q) = \hat \psi (0) M  \sum_{ (q,a)=1} \frac{\psi (q/Q)}{q\varphi (q)}\,\Bigl( \sum_{\substack{n \sim N\\ (n,q)=1}} \beta_n \Bigr)^2.
 \end{equation}
  By the definition of $V^{\rm Err1} (Q)$ we have the equality
  \begin{multline*}
  V^{\rm Err1} (Q) = M \sum_{ (q,a)=1} \frac{\psi (q/Q)}{q\varphi (q)}\,
  \Bigl( \sum_{\substack{n_2 \sim N\\ (n_2,q)=1}} \beta_{n_2} \Bigr)\\
  \Bigl( \sum_{\substack{n_1 \sim N\\ (n_1,q)=1}} \beta_{n_1} 
   \sum_{0 < \vert h \vert \leq H} \hat \psi \Bigl(\frac{h}{q/M}\Bigr) e \Bigl( \frac{ah \overline{n_1}}{q} \Bigr)\Bigr),
  \end{multline*}
  from which we deduce the inequality
\begin{equation}\label{split1}
  \bigl\vert \, V^{\rm Err1} (Q) \, \bigr\vert \leq   M   Q^{-2} \sum_{n_1\sim N} \vert  \beta_{n_1}\vert \sum_{n_2\sim N} \vert  \beta_{n_2}\vert 
  \sum_{0< \vert h \vert \leq H} \bigl\vert \,\mathcal V (n_1, n_2, h) \, \bigr\vert 
\end{equation}
  with 
  $$
  \mathcal V (n_1, n_2,  h) = \sum_{ (q,an_1n_2)=1} \psi (q/Q)\frac{Q^2}{q \varphi (q)} \hat \psi \Bigl(\frac{h}{q/M}\Bigr) e \Bigl( \frac{ah \overline{n_1}}{q} \Bigr).
  $$
  Since $(q,n_1)=1$  B\' ezout's relation gives the equality  $$
  \frac{ah \overline{n_1}}{q} = -ah \frac{\overline q}{n_1} +  \frac{ah}{n_1 q} \mod 1.
  $$
 By the inequality $1 \leq |a| \leq X$  and by the definition of $H$, the derivative of the  bounded function
 $$t\mapsto \psi (t/Q)\frac{Q^2}{t^2}\hat \psi \Bigl(\frac{h}{t/M}\Bigr) e\Bigl( \frac {ah}{n_1 t}\Bigr)$$
  is  $\ll X^\varepsilon t^{-1}$ when $t \asymp Q$.    This allows to make a partial summation over the variable $q$ with the loss
  of a factor $X^\varepsilon$. After all these considerations, we see that there exists a subinterval $\mathcal J \subset [Q/2, 5 Q/2]$ such that we have the inequality 
  $$
 \bigl\vert \,   \mathcal V (n_1, n_2,  h)\, \bigr\vert  \ll  X^\varepsilon \,\Bigl\vert \sum_{\substack{q \in \mathcal J\\ (q, n_1n_2)=1}} \frac{q}{\varphi (q)} e \Bigl(
 ah \frac{\overline q}{ n_1}\Bigr)\, \Bigr\vert.
 $$
   Lemma \ref{shortkloo} leads to the bound
  $$
   \bigl\vert \,   \mathcal V (n_1, n_2,  h)\, \bigr\vert  \ll  X^\varepsilon (ah, n_1)^\frac12 (n_1n_2)^\eta  n_1^\frac 12.
   $$
  Inserting this into \eqref{split1}, we obtain 
  $$
  V^{\rm Err1} (Q) \ll MN^\frac 32 Q^{-2} X^{\varepsilon +\eta}  \sum_{n_1 \sim N} \vert \beta_{n_1}\vert  \sum_{0< \vert h \vert \leq H} (h,n_1)^\frac 12,
  $$
  which finally gives
\begin{equation}\label{V4}
  V^{\rm Err1} (Q) \ll  N^\frac 52 Q^{-1} X^{2\varepsilon +\eta}
\end{equation}
 using the inequality $|\beta_n| \leq \tau_k(n)$ and the definition of $H$. Combining \eqref{V1}, \eqref{V2}, \eqref{V3} and \eqref{V4} we obtain the equality
 \begin{equation}\label{V5}
 V(Q) = V^{\rm MT} (Q) + O_\varepsilon \bigl( (M^{-1} N^2 +N ^\frac 52 Q^{-1})X^{2\varepsilon+\eta} \bigr).
 \end{equation}
  where $V^{\rm MT} (Q)$ is defined in \eqref{V3} and where the constant implicit in the $O_\varepsilon$--symbol is uniform for 
 $a$ satisfying $1 \leq |a| \leq X$. 
 \section{Study of $W(Q)$}
  \subsection{ The preparation of the variables } The conditions of the last summation in \eqref{defW} imply the congruence restriction
  \begin{equation}\label{n1=n2}
  n_1\equiv n_2 \bmod q \text{ and } (n_1n_2, q)=1.
  \end{equation}
  In order to control the mutual multiplicative properties of $n_1$ and $n_2$ we  decompose these variables as
  \begin{equation}\label{decompn1n2}
  \begin{cases} (n_1, n_2 ) =d,\\
  n_1= d \nu_1, \ n_2 = d \nu_2, \  (\nu_1, \nu_2)=1, \\
  \nu_1= d_1\nu'_1\text { with } d_1 \mid d^\infty\text{ and } (\nu'_1, d)=1.
  \end{cases}
  \end{equation} 
  Thanks to $|\beta_n| \leq \tau_k(n)$ and to \eqref{betan=0}  the contribution of the pairs $(n_1,n_2)$ with $d > X^\varepsilon$ to  the right--hand side of \eqref{defW}  is negligible since it is
  \begin{align}
 & \ll X^\eta \sum_{ X^\varepsilon <d \leq 2N} \sum_{m\sim M} \, \sum_{\substack{\nu_1 \sim N/d \\ dm\nu_1-a \not=0}} \sum_{\substack{q\sim Q \\ q \mid d\nu_1 m-a}} 
 \sum_{\substack{\nu_2 \sim N/d \\ \nu_2\equiv  \nu_1 \bmod q}} 1  \nonumber \\
 &  \ll  X^\eta  \sum_{ X^\varepsilon <d \leq 2N} \sum_{m\sim M} \, \sum_{\substack{\nu_1 \sim N/d\\ dm\nu_1-a \not=0}} \tau_2 (\vert d\nu_1 m-a\vert) \Bigl( \frac{ N}{d Q} +1\Bigr) \nonumber\\
 & \ll  MN^2 Q^{-1} X^{\eta -\varepsilon} +  X^{1 + \eta}. \label{d>X}
  \end{align}
  Now consider the contribution  of the pairs $(n_1,n_2)$ with $d \leq  X^\varepsilon$ and $d_1 >X^\varepsilon$ to  the right--hand side of \eqref{defW}. It   is
  \begin{align}
 & \ll   X^\eta \sum_{ d \leq X^\varepsilon} \sum_{\substack{X^\varepsilon <d_1 <2N\\ d_1 \mid d^\infty}} \sum_{m\sim M} \, \sum_{\substack{\nu'_1 \sim N/(dd_1) \\ dd_1m\nu'_1-a \not=0}} \sum_{\substack{q\sim Q \\ q \mid dd_1\nu'_1 m-a}} 
 \sum_{\substack{\nu_2 \sim N/d \\ \nu_2\equiv d_1 \nu'_1 \bmod q}} 1  \nonumber \\
 & \ll   X^\eta  \sum_{ d \leq X^\varepsilon} \sum_{\substack{X^\varepsilon <d_1 <2N\\ d_1 \mid d^\infty}} \sum_{m\sim M} \, \sum_{\substack{\nu'_1 \sim N/(d d_1)\\ dd_1m\nu'_1-a \not=0}} \tau_2 (\vert dd_1\nu'_1 m-a\vert) 
 \Bigl( \frac{ N}{d Q} +1\Bigr) \nonumber\\
 &\ll X^\eta MN^2Q^{-1}\sum_{d\leq X^\varepsilon} \frac{1}{d^2} \sum_{\substack{d_1 > X^\varepsilon\\ d_1 \mid d^\infty}}\frac{1}{d_1} +X^\eta  MN \sum_{d\leq X^\varepsilon} \frac{1}{d} \sum_{\substack{d_1 > X^\varepsilon\\ d_1 \mid d^\infty}}\frac{1}{d_1}\nonumber \\
 & \ll MN^2 Q^{-1}  X^{\eta -\frac{\varepsilon}{2}} +  X^{1 + \eta -\frac{\varepsilon}{2}}. \label{d1>X}
  \end{align}
  Consider the conditions 
  \begin{equation}\label{d<Xd1<X}
  d<X^\varepsilon \text{ and } d_1 < X^\varepsilon,
  \end{equation}
  and the subsum $\widetilde {W} (Q)$ of $W(Q)$ where the variables $n_1$ and $n_2$ satisfy the condition \eqref{d<Xd1<X}.
  By \eqref{d>X} and \eqref{d1>X} we have the equality
  \begin{equation}\label{W=tildeW}
  W(Q) = \widetilde{ W}(Q)+ O \bigl(  MN^2 Q^{-1}  X^{\eta-\frac{\varepsilon}{2}}+  X^{1 + \eta}\bigr).
\end{equation}
 
  \subsection{Expansion in Fourier series} We apply  Lemma \ref{existenceofpsi}  to the last sum over $m$ in \eqref{defW} with $H$ defined in \eqref{defH}. This decomposes $\widetilde{W}(Q)$ into the sum
  \begin{equation}\label{decompoW}
  \widetilde{W}(Q)= \widetilde{W}^{\rm MT}(Q) + \widetilde{W}^{\rm Err1} (Q) +\widetilde{ W}^{\rm Err2} (Q),
  \end{equation}
  where each of the three terms corresponds to the contribution of each term on the right--hand side of \eqref{ineqpsi}.
  
  The easiest term is $\widetilde{ W}^{\rm Err2} (Q)$ since, by $|\beta_n| \leq \tau_k(n)$ and \eqref{universal},  it satisfies the inequality
  \begin{align}
  \widetilde{ W}^{\rm Err2} (Q) &\ll M^{-1}  \sum_{Q / 2 \leq q \leq 5 Q / 2}\ \underset{\substack{n_1, n_2 \sim N\\ n_1 \equiv n_2 \bmod q}}{\sum \sum} \tau_k (n_1)   \tau_k (n_2)\nonumber\\
 & \ll M^{-1} N^2 X^\eta
  \label{WErr2}
   \end{align}
  According to the restriction \eqref{n1=n2}, we see that the main term is
  \begin{equation}\label{tildeWMT}
 \widetilde{ W}^{\rm MT} (Q) =  \hat \psi (0) M   \sum_{ (q,a) =1}  \frac{\psi (q/Q)}{q}\sum_{(\delta , q) =1} \Bigl(
 \underset {\substack{n_1, n_2 \sim  N\\ n_1\equiv n_2\equiv \delta \bmod q}}{ \sum\ \ \sum} \beta_{n_1}\beta_{n_2}\Bigr),
  \end{equation}
  where the variables $n_1$ and $n_2$ satisfy the conditions \eqref{d<Xd1<X}. By a similar computation leading  to \eqref{d>X} and \eqref{d1>X}  we can drop  these conditions at the cost of the same error term. In other words  the equality \eqref{tildeWMT} can be written as
    \begin{equation}\label{W=W}
   \widetilde{ W}^{\rm MT} (Q) =W^{\rm MT} (Q) + O \bigl(  MN^2 Q^{-1} X^{\eta-\frac{\varepsilon}{2}} +  X^{1 + \eta}\bigr),
 \end{equation}
   where  $W^{\rm MT} (Q)$ is the new main term, which is defined
   by 
   \begin{equation}\label{defWMT}
 W^{\rm MT} (Q) =  \hat \psi (0) M   \sum_{  (q,a) =1}  \frac{\psi (q/Q)}{q}\sum_{(\delta , q) =1} \Bigl( \sum_{\substack{n \sim  N\\ n \equiv \delta \bmod q}} \beta_{n} \Bigr)^2.
   \end{equation}
   
  \subsection{Dealing with the main terms}
  We now gather  the main terms appearing in \eqref{UMT},  \eqref{V3},    \eqref{W=tildeW}, \eqref{decompoW},  \eqref{W=W},  and in \eqref{defWMT}.  The main term of $W(Q)-2 V(Q) +U(Q)$ is  
  \begin{multline*}
  W^{\rm MT} (Q) -2 V^{\rm MT} (Q) + U^{\rm MT} (Q)\\ =\hat \psi (0) M  \sum_{ (q,a)=1} \frac{\psi (q/Q)}{q} \sum_{(\delta, q) =1}
  \Bigl( \sum_{\substack{n\sim N \\ n\equiv \delta \bmod q}} \beta_n -\frac{1}{\varphi (q)} \sum_{\substack{n \sim N \\ (n,q) =1}} \beta_n
  \Bigr)^2.
  \end{multline*}
  Appealing to Lemma \ref{Ba-Da-Ha} we deduce that, for any $A$,  we have the equality
  \begin{equation}\label{W-2V+U}
   W^{\rm MT} (Q) -2 V^{\rm MT} (Q) + U^{\rm MT} (Q) = O \Bigl(  M\cdot  Q^{-1}\cdot  N^2  (\log 2N)^{-A}\Bigr)
   \end{equation}
   provided that
   \begin{equation}\label{Qleq}
   Q \leq N(\log 2N)^{-B},
   \end{equation}
   for some  $B =B(A).$

   \subsection{Preparation of the exponential sums} By the definition \eqref{decompoW}, we have the equality
   \begin{equation*}
  \widetilde{ W}^{\rm Err1} (Q) =  M\sum_{q} \frac {\psi (q/Q)}{q} \underset{\substack{n_1,  n_2 \sim N\\ n_1 \equiv n_2 \bmod q}}{\sum \sum} \beta_{n_1} \beta_{n_2}
   \sum_{0 < \vert h \vert \leq H} \hat \psi \Bigl( \frac{h}{q/M}\Bigr)\,e \Bigl( \frac{ah \overline{n_1}}{q}\Bigr),
   \end{equation*}
   where the variables $(n_1,n_2)$ are such  the associated $d$ and $d_1$ satisfy  \eqref{d<Xd1<X}.
   
   This implies that any pair $(n_1, n_2)$ satisfies $n_1-n_2 \not= 0$ and since  we have 
 $n_1\equiv n_2 \bmod q$  (see \eqref{n1=n2}) these integers cannot be near to each other, indeed they satisfy the inequality
 \begin{equation*}
 \vert n_1-n_2 \vert \geq  Q/2.
 \end{equation*}
 Since we have $(n_1n_2, q)=1$, we can equivalently  write the congruence $n_1-n_2 \equiv 0 \bmod q$ as
\begin{equation}\label{nu1-nu2}
 \nu_1-\nu_2 = d_1 \nu'_1-\nu_2= qr,
 \end{equation}
 and instead of summing over $q$, we will sum over $r$. Note that $1 \leq \vert r \vert \leq  R/d$, where 
 \begin{equation}\label{defR}
 R =2N Q^{-1}.
 \end{equation}
 In the summations,  the  pair of variables $(n_1,n_2)$ is replaced by  the quadruple $(d, d_1, \nu'_1, \nu_2)$  (see \eqref{decompn1n2}). The variables $d$ and $d_1$ are small, so we expect no substantial cancellations
 when summing over them. Hence  for some

 \begin{equation*}
 d,\,d_1\leq X^\varepsilon,\ d_1 \mid d^\infty, 
 \end{equation*} 
 we have the inequality
\begin{equation}\label{587}
   \widetilde{W}^{\rm Err1} (Q) \ll X^{2\varepsilon } MQ^{-1}  \bigl\vert \, \mathcal W \,  \bigr\vert,
  \end{equation}
  where $\mathcal W= \mathcal W(d,d_1) $ is the quadrilinear form in the four variables $r$, $\nu'_1$, $\nu_2$ and $h$ defined by
  \begin{multline}\label{defmathcalW}
 \mathcal W = \sum_{1\leq \vert r \vert \leq R/d} \underset{\substack{dd_1\nu'_1, d\nu_2 \sim N \\ d_1\nu'_1\equiv  \nu_2 \bmod r}}
  {\sum \sum}\beta_{dd_1 \nu'_1} \beta_{d\nu_2}
   \frac{ \psi \bigl( (d_1\nu'_1-\nu_2)/ (rQ)\bigr)}{ (d_1\nu'_1-\nu_2)/ (rQ)}\\
    \sum_{0 <\vert h \vert \leq H}  \hat \psi \Bigl( \frac{h}{(d_1\nu'_1-\nu_2)/(rM)}\Bigr)
  e(\cdot ),
  \end{multline}
  where $e(\cdot)$ is the oscillating factor
  $$
  e(\cdot) = e \Bigl( \frac{ah\, \overline{dd_1 \nu'_1}}{(d_1 \nu'_1- \nu_2)/r} \Bigr),
  $$
  and  where the variables satisfy the following divisibility conditions: 
    $$
  (d_1\nu'_1, \nu_2) =1,\ (\nu'_1, d) =1 \text{ and } (dd_1 \nu'_1 r, d_1\nu'_1 - \nu_2) =r.
  $$
  Using B\'ezout's reciprocity formula we transform the factor $e(\cdot)$ as follows: 
  \begin{equation*}
 \frac{ah\, \overline{dd_1 \nu'_1}}{(d_1 \nu'_1- \nu_2)/r} = -  ah\frac{ \overline {(d_1 \nu'_1- \nu_2)/r}}{dd_1 \nu'_1} +\frac{ahr}{dd_1 \nu'_1 
( d_1\nu'_1 - \nu_2) }  \bmod 1.
\end{equation*}
 Since $(dd_1, \nu'_1)= (r,\nu'_1) =1$ we can apply B\'ezout formula again, giving the equalities
\begin{align*}
ah \frac{ \overline {(d_1 \nu'_1- \nu_2)/r}}{dd_1 \nu'_1} &=ah \frac{ \overline{\nu'_1} \, \overline {(d_1 \nu'_1- \nu_2)/r}}{dd_1} +ah  \frac {\overline{dd_1}\, \overline {(d_1 \nu'_1- \nu_2)/r}}{\nu'_1}\bmod 1\\
& = ah \frac{ \overline{\nu'_1} \, \overline {(d_1 \nu'_1- \nu_2)/r}}{dd_1} -ah \frac{r \overline{dd_1\nu_2}}{\nu'_1} \bmod 1 
\end{align*}
The first term on the right--hand side of the above equality depends only on the congruences classes of  $a$, $h$, $r$, $\nu'_1$ and $\nu_2$ modulo $dd_1$. 
As a consequence of the above discussion, we see that there exists a coefficient $\xi=\xi (a,h, r ,\nu'_1, \nu_2)$ of modulus $1$, depending only on the congruence classes of $a$, $h$, $r$, $\nu'_1$ and $\nu_2$ modulo $dd_1$ such that we have the equality
$$
e(\cdot )= \xi\cdot e \Bigl( \frac{ahr}{dd_1 \nu'_1 
( d_1\nu'_1 - \nu_2) }\Bigr) \cdot e \Bigl(  \frac{ah r \overline{dd_1\nu_2} }{\nu'_1} \Bigr).
$$
Returning to \eqref{defmathcalW},  and fixing the congruences classes modulo $dd_1$ of the variables $h$,  $r$, $\nu'_1$ and $\nu_2$, we see that there exists
$$0 \leq a_1, a_2, a_3, a_4 < d d_1$$
such that 
$\mathcal W$
satisfies the inequality, 
\begin{multline}\label{W1}
\vert  \mathcal W \bigr\vert \leq X^{6\varepsilon} \\ \sum_{\substack{1\leq \vert r\vert \leq R/d \\ r \equiv a_1 \bmod{d d_1}}} 
\Bigl\vert 
\underset{\substack{dd_1\nu'_1, \, d\nu_2 \sim N \\ d_1\nu'_1\equiv  \nu_2 \bmod r \\ \nu_1' \equiv a_2 \bmod d d_1 \\ \nu_2 \equiv a_3 \bmod d d_1}}
  {\sum \sum}\beta_{dd_1 \nu'_1} \beta_{d\nu_2} \sum_{\substack{1\leq \vert h \vert \leq H \\ h \equiv a_4 \bmod d d_1}}
\Psi_r (h, \nu'_1, \nu_2)
 e \Bigl(  \frac{ah r \overline{dd_1\nu_2} }{\nu'_1} \Bigr)
\Bigr\vert,
\end{multline}
 where $\Psi_r$ is the differentiable function 
$$
\Psi_r (h, \nu'_1, \nu_2)=  \frac{ \psi \bigl( (d_1\nu'_1-\nu_2)/ (rQ)\bigr)}{ (d_1\nu'_1-\nu_2)/ (rQ)} \,  \hat \psi \Bigl( \frac{h}{(d_1\nu'_1-\nu_2)/(rM)}\Bigr)\, e \Bigl(\frac{ahr}{dd_1 \nu'_1 
( d_1\nu'_1 - \nu_2) } \Bigr),
$$


 In order to perform
the Abel summation over the variables $\nu'_1$, $\nu_2$ and $h$  (see for instance \cite[Lemme 5]{FoActaMath}) we must have information on the partial derivatives of the $\Psi_r$--function. Indeed  for $0\leq \epsilon_0,\, \epsilon_1, \epsilon_2 \leq 1$, we have the inequality   
\begin{equation}\label{bypart}
\frac{\partial^{\epsilon_0+ \epsilon_1+ \epsilon_2}}{\partial h^{\epsilon_0}\partial {\nu'_1}^{\epsilon_1} \partial\nu_2^{\epsilon_2}} \Psi_r (h, \nu'_1, \nu_2) \ll X^{50\varepsilon} \vert h\vert^{-\epsilon_0} \, {\nu'_1}^{-\epsilon_1} \, {\nu_2}^{-\epsilon_2} \bigl(N/(rQ)\bigr)^{ \epsilon_1 + \epsilon_2 },
\end{equation}
as a consequence of  the inequality $\vert d_1\nu'_1 -\nu_2 \vert \geq rQ/2$ (see\eqref{nu1-nu2}), of the definition of $H$ (see \eqref{defH}) and of the inequality $1 \leq |a| \leq X$.

Since $(d_1 \nu'_1 \nu_2,r)=1$  we detect the congruence $d_1 \nu'_1 \equiv \nu_2 \bmod r$ by    the $ \varphi (r)$ Dirichlet characters  $\chi$ modulo $r$. By \eqref{bypart} we eliminate the function $\Psi_r$ in the inequality \eqref{W1} which becomes 
\begin{multline}\label{W2}
\vert  \mathcal W \bigr\vert \leq X^{60\varepsilon} N^2Q^{-2}  \sum_{\substack{1\leq \vert r\vert \leq R/d \\ r \equiv a_1 \bmod d d_1}} \frac{1}{\varphi (r) \,  r^2} \sum_{\chi \bmod r}\\ 
\Bigl\vert 
\underset{\substack{dd_1\nu'_1\in \mathcal N_1\, d\nu_2 \in \mathcal N_2 \\ \nu_1' \equiv a_2 \bmod d d_1 \\ \nu_2 \equiv a_3 \bmod d d_1}} 
  {\sum \sum}\chi (d\nu'_1) \overline{\chi}(\nu_2) \beta_{dd_1 \nu'_1} \beta_{d\nu_2} \sum_{\substack{h\in \mathcal H \\ h \equiv a_4 \bmod d d_1}} 
  e \Bigl(  \frac{ah r \overline{dd_1\nu_2} }{\nu'_1} \Bigr)
\Bigr\vert,
\end{multline}
\noindent $\bullet$ where $\mathcal N_1 $ and $\mathcal N_2$ are two intervals included in $[N, 2N]$,

\noindent $\bullet$ and where $\mathcal H$ is the union of two intervals included in $[-H, -1]$ and $[1, H]$ respectively.

 Denote by $\mathcal W_1(r, \chi)$ the  inner sum over $\nu'_1$, $\nu_2$ and $h$ in \eqref{W2}.  Remark that the trivial bound for $\mathcal W_1 (r, \chi)$ is $O( X^\eta  H  N^2/(d^2d_1))$. We now can apply Lemma \ref{trilinear} to the sum $\mathcal W_1 (r, \chi)$, with the choice of parameters
 $$
 \vartheta \rightarrow  ar, \ A \rightarrow H, M\rightarrow  N \text{ and } N \rightarrow N .
 $$
 We obtain the bound
 $$
 \mathcal W_1 (r, \chi) \ll  H^\frac 12 N^\frac 12 N^\frac 12  X^{ \varepsilon +\eta}\Bigl( 1 + \frac{\vert a \vert \, \vert r\vert  H}{N^2}\Bigr) \Bigl( (HN^2)^{\frac{7}{20} + \varepsilon} N^\frac 14+ (HN^2)^{\frac 38 + \varepsilon} (HN)^\frac 18 \Bigr).
 $$
By the definition    \eqref{defR}, \eqref{defH}  and the inequality $1 \leq |a| \leq X$   we deduce the inequality
\begin{equation*} 
\mathcal W_1 (r, \chi) \ll X^{4\varepsilon  +\eta} \bigl( H^\frac{17}{20} N^\frac{39}{20} + H N^\frac{15}{8}\bigr),
\end{equation*} 
 and using \eqref{defH} we finally deduce
$$  
\mathcal W_1 (r, \chi) \ll X^{5\varepsilon +\eta}  \bigl( M^{-\frac{17}{20}} N^\frac{39}{20} Q^\frac{17}{20}  + M^{-1} N^\frac{15}{8}Q\bigr).
$$
 Returning to \eqref{W2},  summing over $\chi$ and  $r$  and inserting into \eqref{587} we obtain the bound
\begin{equation}\label{tildeWErr2<<}
  \widetilde{W}^{\rm Err1} (Q) \ll X^{67 \varepsilon +\eta } \bigl( M^\frac {3}{20} N^\frac{79}{20} Q^{-\frac{43}{20} } + N^\frac{31}{8 } Q^{-2}
  \bigr).
\end{equation}
 \subsection{Conclusion}  We have now all the elements to bound $ \Delta (\boldsymbol \alpha, \boldsymbol \beta, M, N, Q, a)$.
 By \eqref{CS}, \eqref{UMT}, \eqref{V5}, \eqref{W=tildeW}, \eqref{decompoW},  \eqref{WErr2}, \eqref{W=W} and \eqref{tildeWErr2<<} we  have 
the inequality
\begin{multline*}
 \Delta^2 \ll MQ\mathcal L^{k^2-1} \Bigl\{ \Bigl( W^{\rm MT} (Q) -2 V^{\rm MT} (Q)+ U^{\rm MT} (Q)\Bigr) + N^2Q^{-1} X^\eta \\
+ (M^{-1} N^2 +N ^\frac 52 Q^{-1})X^{2\varepsilon+\eta}
 \\+ \bigl( MN^2 Q^{-1} X^{\eta -\frac{\varepsilon}{2}}  +
 X^{1 + \eta} \bigl)  + X^{67 \varepsilon +\eta } \bigl( M^\frac {3}{20} N^\frac{79}{20} Q^{-\frac{43}{20} } + N^\frac{31}{8 } Q^{-2}
  \bigr)
\Bigr\}, 
\end{multline*}
which is shortened in (recall \eqref{universal})
\begin{multline*}
 \Delta^2 \ll MQ\mathcal L^{k^2-1} \Bigl\{  MN^2 Q^{-1} (\log 2N)^{-A} +  \\+   MN^2 Q^{-1} X^{\eta -\frac{\varepsilon}{2}}   
  + X^{67 \varepsilon +\eta } \bigl( M^\frac {3}{20} N^\frac{79}{20} Q^{-\frac{43}{20} } + N^\frac{31}{8 } Q^{-2}
  \bigr)
\Bigr\}, 
\end{multline*}
by  \eqref{W-2V+U} and  \eqref{Qleq} if one assumes  
\begin{equation}\label{10}
Q \leq N X^{-\varepsilon}.
\end{equation}
To finish the proof of Theorem \ref{thm:main}, it remains to find sufficient conditions 
   over $M$, $N$ and $Q$ to ensure the bound $\Delta^2 \ll M^2 N^2 \mathcal L^{-A}$. Choosing $\eta= \varepsilon /5$, we have to study  the following three inequalities hold
    \begin{equation}\label{system}
   \begin{cases} 
 MQ \cdot  MN^2 Q^{-1} X^{-\frac{\varepsilon}{4} }&\ll M^2N^2X^{-\frac{\varepsilon}{4}}, \\
 MQ \cdot M^\frac{3}{20} N^\frac{79}{20} Q^{-\frac{43}{20} }X^{68 \varepsilon}& \ll M^2N^2X^{-\frac{\varepsilon}{4}}, \\
  MQ \cdot  N^\frac{31}{8} Q^{-2} X^{68 \varepsilon} & \ll M^2N^2X^{-\frac{\varepsilon}{4}}. 
   \end{cases}
   \end{equation}
  The first inequality   is trivially satisfied. The second  inequality  of \eqref{system} is satisfied  as soon as 
  \begin{equation}\label{14}
     Q> N^\frac{56}{23} X^{-\frac {17 }{23} +65 \varepsilon}.
\end{equation}
 This inequality combined with  \eqref{10} implies that  $N< X^\frac{17}{33}$.  
The last condition   of \eqref{system} is   satisfied as soon as 
$$
Q > N^\frac{23}{8} X^{-1 +69 \varepsilon}. 
$$ 
We can   drop this condition since it is a consequence of \eqref{14} and of the inequality $N < X^\frac{17}{33}$. The proof of Theorem \ref{thm:main} is now complete.
   
   \section{Proof of Corollary \ref{appli}}  \label{proofcorollary}  Let $S(M,N)$ be the sum we are studying in this corollary. We use Dirichlet's hyperbola argument  to write 
  \begin{equation}\label{Diri}
  mn -1=qr,
  \end{equation}
and by symmetry we can impose  the condition $q< r$. This symmetry creates a factor $2$ unless $mn-1$ is a perfect square.  The contribution to $S(M,N)$  of the $(m,n)$ such that $mn-1$ is a square is bounded by $O( X^{\frac 12 +\eta})$ with $\eta >0$ arbitrary. This is a consequence of $|\beta_n| \leq \tau_k(n)$.

 The decomposition \eqref{Diri}, the  constraint  $q < r$   and the inequalities $X-1 \leq mn-1< 4X$   imply  that $q\leq 2 X^\frac 12$. In counterpart, if $q <X^\frac 12$ we are sure that $q <r$.  Thus we have the equality
 \begin{align}\label{END}
 S(M,N) &=
 2 \sum_{q \leq X^{1/2}} \underset{\substack{ m\sim M , n\sim N \\ mn\equiv 1 \bmod q}}{ \sum \sum} \alpha_m \beta_n  + 2 \underset{\substack{mn-1 =qr, q<r \\
 m\sim M, n\sim N \\ X^{1/2} < q  \leq 2 X^{1/2}} }{\sum\ \sum\  \sum\  \sum} 
  \alpha_m  \beta_n + O(X^{\frac 12 +\eta})\nonumber\\
 &= 2S_0 (M,N) +2 S_1 (M,N) + O(X^{\frac 12 +\eta}),
 \end{align} 
 by definition.
 A direct application of  Theorem \ref{thm:main}  with $Q= X^\frac 12$  gives the equality
 \begin{equation}\label{Eq1}
 S_0 (M,N)=    \sum_{q\leq X^{1/2}} \frac{1}{\varphi (q)} \underset{\substack{m\sim M, n\sim N\\ (mn,q) =1}}{\sum  \sum} \alpha_m \beta_n + O( X \mathcal L^{-C}),
 \end{equation}
 for any $C$. 
 
 For the second term $S_1 (M,N)$,  we must get rid of the constraint $q<r$. A technique among others is to precisely control the size of the variables $m$, $n$ and $q$.
If it is so, then  $r=(mn-1)/q$ is also controlled and one can check if it satisfies $r >q$. 
We introduce  the following factor of dissection:
 $$
 \Delta = 2^\frac{1}{[ \mathcal L^B]},
 $$
 where $B= B(A)$ is a parameter to be fixed later, and where $[y]$ is the largest integer $\leq y$.   If we denote by $L_0= [\mathcal L^B]$ we see that $\Delta^{L_0} =2$ and that $\Delta =1 + O (\mathcal L ^{-B})$. We denote by $M_0$, $N_0$ and $Q_0$ any numbers in the sets
 \begin{align*}
 \mathcal M_0&:=\{M, \Delta M, \Delta^2 M, \Delta^3 M,\cdots ,\Delta^{L_0-1} M\}\\
 \mathcal N_0&:= \{N, \Delta N, \Delta^2 N, \Delta^3 N,\cdots , \Delta^{L_0-1} N\}\\
  \mathcal Q_0 & :=\{X^\frac 12, \Delta X^\frac 12, \Delta^2 X^\frac 12, \Delta^3 X^\frac 12,\cdots, \Delta^{L_0-1} X^\frac 12\},
 \end{align*} respectively.
 We split $S_1 (M,N)$ into 
 \begin{equation}\label{decompo1}
 S_1 (M,N) =\sum_{M_0 \in \mathcal M_0} \  \sum_{N_0 \in \mathcal N_0} \ \sum_{Q_0 \in \mathcal Q_0} S_1(M_0,N_0,Q_0),
 \end{equation}
   where  $S_1(M_0,N_0,Q_0)$ is defined by
 $$
 S_1(M_0,N_0,Q_0)=\sum_{ q\simeq Q_0}\  \underset{\substack{  m\simeq M_0,  n  \simeq N_0 \\ mn\equiv 1 \bmod q}}{ \sum \sum}\alpha_m \beta_n.
 $$
  \noindent $\bullet$ where the notation $y\simeq Y_0$ means that the integer $y$ satisfies the inequalities  $Y_0 \leq y < \Delta Y_0$,
 \vskip .2cm
  \noindent $\bullet$ where the variables $m$, $n$ and $q$ satisfy the extra condition 
 \begin{equation}\label{r>q}
mn-1>q^2.
\end{equation}
Note that the decomposition \eqref{decompo1} contains 
\begin{equation}\label{O(LB)}
O( \mathcal L^{3B}),
\end{equation}
terms.

Since $mn-1 \geq  M_0N_0 -1  $ and  $q^2  <Q_0^2 \Delta^2$ in each sum $S_1(M_0,N_0, Q_0)$, we can drop the condition 
\eqref{r>q} in the definition of this sum as soon as we have
\begin{equation} \label{M0N0<Q0}
M_0N_0-1   >Q_0^2 \Delta^2.
\end{equation}
When   \eqref{M0N0<Q0} is satisfied, the variables $m$, $n$ and $q$ are independent and a direct application of Theorem \ref{thm:main} gives for each sum $S_1 (M_0, N_0, Q_0) $, the equality
\begin{equation}\label{nunu}
S_1 (M_0,N_0, Q_0)= \sum_{ q\simeq Q_0} \frac{1}{\varphi (q)}  \underset{\substack{  m\simeq M_0,  n  \simeq N_0\\ (mn,q)=1}}{ \sum  \sum}\alpha_m \beta_n +O_C (X \mathcal  L^{-C}),
\end{equation}
where  $C$ is arbitrary.  

It remains to consider the case where  \eqref{M0N0<Q0} is not satisfied, which means that  $(M_0, N_0, Q_0) \in \mathcal E_0$ where 
\begin{equation}\label{inverse}
\mathcal E_0 := \bigl\{ (M_0, N_0, Q_0)\, ;\ M_0N_0-1  \leq  Q_0^2 \Delta^2\bigr\}.
\end{equation}
We now  show that the variable $n$ considered in such a  $S_1 (M_0, N_0, Q_0)$  varies in a rather short interval. More precisely, since   $M_0 \Delta>m, N_0 \Delta  >n$ and $Q_0 < q\ $ we deduce from the   definition 
\eqref{inverse} that $q^2  \geq  mn  \Delta^{-4} -\Delta^{-2}$ which implies the inequality $q \geq  (mn)^\frac 12 \Delta^{-2}  -1$.  Combining with  \eqref{r>q}, we get  the inequality
$$
(mn)^\frac 12 \Delta^{-2} -1 < q < (mn)^\frac 12  
$$
which implies 
 $$
 (q^2/m)  < n< ((q+1)^2/m)\Delta^4.
 $$
Using the inequality
 $$
 X^{1/2} \leq q \leq 2 X^{1/2} \ll  (Q^2/M)(\Delta^4-1)X^{-\frac{\delta}{2}},
 $$ 
 and $|\beta_n| \leq \tau_k(n)$ we apply  Lemma \ref{dkinarith} to see that
 \begin{align*}
&\underset{(M_0,N_0,Q_0) \in \mathcal E_0}{\sum\ \sum\ \sum} S_1( M_0, N_0, Q_0)\\ 
&\ll    \sum_{m \sim M} \tau_k (m) \sum_{\substack{q \sim X^{1/2}\\ (q,m)=1}} \sum_{(q^2/m)  < n< ((q+1)^2/m)\Delta^4}\tau_k (n) \\
&\ll (\Delta^4 -1) \mathcal L^{k-1}  \sum_{m\sim M} \tau_k (m) 
\sum_{q \sim X^{1/2} }\frac{1}{\varphi (q)}\cdot \frac{q^2}{m}\\
&\ll \mathcal L^{2k-2-B} X.
\end{align*}
 
 Actually, by introducing a main term back, which is less than the error term, we can also write this bound as an equality
\begin{multline}
 \underset{(M_0,N_0,Q_0) \in \mathcal E_0}{\sum\ \sum\ \sum} S_1( M_0, N_0, Q_0)
 \\
 = \underset{(M_0,N_0,Q_0) \in \mathcal E_0}{\sum\ \sum\ \sum} \sum_{ q\simeq Q_0}\ \frac{1}{\varphi (q)} \underset{\substack{ m\simeq M_0,  n  \simeq N_0\\ (mn,q)=1}}{ \sum \sum}\alpha_m \beta_n
 +O(   \mathcal L^{2k-2-B} ), \label{Eq2}
 \end{multline}
  where the variables $(m,n,q)$ continue to satisfy \eqref{r>q}.
 
 Gathering \eqref{END}, \eqref{Eq1},  \eqref{decompo1},    \eqref{O(LB)}, \eqref{nunu},  \eqref{Eq2}
 we obtain 
\begin{multline*}
 S(M,N)=  2 \sum_{q\leq X^{1/2}} \frac{1}{\varphi (q)} \underset{\substack{m\sim M, n\sim N\\ (mn,q) =1}}{\sum \sum} \alpha_m \beta_n\\+2\sum_{M_0 \in \mathcal M_0} \  \sum_{N_0 \in \mathcal N_0} \ \sum_{Q_0 \in \mathcal Q_0} \sum_{ q\simeq Q_0}\ \frac{1}{\varphi (q)} \underset{\substack{  m\simeq M_0,  n  \simeq N_0\\ (mn,q)=1}}{ \sum \sum}\alpha_m \beta_n \\
 +O( \mathcal L^{3B-C} X) +O(   \mathcal L^{2k-2-B} X) + O(X^{\frac 12 +\eta}),
\end{multline*}
   where the variables $(m,n,q)$ continue to satisfy \eqref{r>q}. 
   Putting the different  summations  back together, we complete the proof of Corollary \ref{appli} by choosing $B$ and $C$ in order to satisfy the equalities  $-A= 3B-C = 2k-2-B$.

\bibliographystyle{plain} \bibliography{FouvryRadzi4.bib}

 \end{document}